\theoremstyle{plain}
\newtheorem{theorem}{Theorem}[section]
\newtheorem{thm}[theorem]{Theorem}
\newtheorem{cor}[theorem]{Corollary}
\newtheorem{lem}[theorem]{Lemma}
\newtheorem{prop}[theorem]{Proposition}
\newcounter{kludge}
\newcounter{kludgeb}
\theoremstyle{definition}
\theoremstyle{remark}
\newcommand{\marpar}[1]{}
\newcommand{\mni}{\medskip\noindent}
\newcommand{\mbb}{\mathbb}
\newcommand{\QQ}{\mbb{Q}}
\newcommand{\ZZ}{\mbb{Z}}
\newcommand{\PP}{\mbb{P}}
\newcommand{\mc}{\mathcal}
\newcommand{\OO}{\mc{O}}
\newcommand{\wt}{\widetilde}
\newcommand{\SP}{\text{Spec }}
\newcommand{\Hilb}{\text{Hilb}}
\newcommand{\HB}[2]{\Hilb^{#1}_{#2}}
\newcommand{\n}[1]{n_{#1}}
\newcommand{\nee}{\n{e}}
\newcommand{\N}[1]{N_{#1}}
\newcommand{\Ne}{\N{e}}
\newcommand{\wN}[1]{\wt{N}_{#1}}
\newcommand{\wNe}{\wN{e}}
\newcommand{\p}[1]{P_{#1}}
\newcommand{\pr}{\p{r}}
\newcommand{\f}[1]{f_{#1}}
\newcommand{\fe}{\f{e}}
\newcommand{\F}[1]{F_{#1}}
\newcommand{\Fe}{\F{e}}
\newcommand{\Gg}[1]{G_{#1}}
\newcommand{\Gge}{\Gg{e}}
\newcommand{\lt}{\left}
\newcommand{\rt}{\right}
\newsavebox{\sembox}
\newlength{\semwidth}
\newlength{\boxwidth}
\newsavebox{\semrbox}
\newlength{\semrwidth}
\newlength{\boxrwidth}
\title
{Veronese Varieties Contained in Hypersurfaces} 
\author[Starr]{Jason Michael Starr}
\address{Department of Mathematics \\
  Stony Brook University \\ Stony Brook, NY 11794}
\email{jstarr@math.sunysb.edu}
\date{\today}
\begin{document}


\begin{abstract}
Alex Waldron proved 
that for sufficiently general degree $d$ hypersurfaces in 
projective $n$-space, the Fano scheme parameterizing
$r$-dimensional linear spaces contained in the hypersurface is nonempty
precisely for the degree range $n\geq \N{1}(r,d)$
where
the ``expected dimension'' $\f{1}(n,r,d)$ is nonnegative, in which case
$\f{1}(n,r,d)$ equals the (pure) dimension.
Using work by Gleb Nenashev, we
prove that for sufficiently general 
degree $d$ hypersurfaces in projective $n$-space, the parameter space 
of $r$-dimensional $e$-uple Veronese varieties contained in the
hypersurface 
is nonempty
of pure dimension equal to the ``expected dimension'' $\fe(n,r,d)$
in a degree
range $n\geq \wNe(r,d)$ that is asymptotically sharp.  Moreover, 
we show that for $n\geq 1+\N{1}(r,d)$, the Fano scheme parameterizing
$r$-dimensional linear spaces is irreducible.
\end{abstract}


\maketitle



\section{Introduction and Statement of Results} \label{sec-int}
\marpar{sec-int}

\mni
Let $k$ be an algebraically closed field, not necessarily of
characteristic $0$.  For integers $n,r,e>0$, 
a \emph{Veronese $e$-uple $r$-fold in} $\PP^n_k$ is the image of a
morphism $\nu:\PP^r_k\to \PP^n_k$ such that $\nu^*\OO(1)$ is
isomorphic to $\OO(e)$ and such that the pullback homomorphism,
$$
\nu^*_1:H^0(\PP^n_k,\OO(1))\to H^0(\PP^r_k,\OO(e)),
$$
is surjective; such a morphism is a closed immersion.  
For brevity, the image $\nu(\PP^r)$ of such a morphism is called a
$V_e^r$.  
Denoting by $\pr(t) \in \QQ[t]$ the numerical
polynomial with $\pr(d) = \binom{d+r}{r}$ for all integers $d\geq -r$,
the $\OO(1)$-Hilbert polynomial of the image of $\nu$ equals
$\pr(et)$.  Denote $\pr(e)-1$ by $\nee(r)$, e.g., $n_1(r)$ equals $r$, and
$n_2(r)$ equals $r(r+3)/2$.  
In the Hilbert scheme $\text{Hilb}^{\pr(et)}_{\PP^n/k}$
there is an open subscheme $\Gge(r,\PP^n_k)$ parameterizing Veronese 
$e$-uple $r$-folds.  This open scheme is nonempty precisely when
$h^0(\PP^n_k,\OO(1)) \geq h^0(\PP^r,\OO(e))$, i.e., when $n\geq
\nee(r).$  
When it is nonempty, $\Gge(r,\PP^n_k)$ 
is smooth and geometrically integral of
dimension 
$$
f_e(n,r):= (n+1)(\nee(r)+1)-(r+1)^2.
$$
Note, in particular, that $f_1(n,r)$ equals $(n-r)(r+1)$, which is
nonnegative if and only if $n\geq r$, i.e., $n\geq n_1(r)$.  Please note,
for every $e\geq 2$, for every $n\geq 1$, for every $r\geq 1$,
$f_e(n,r)$ is positive, even though $\Gge(r,\PP^n_k)$ is empty for
$n<\nee(r)$.  
In fact, $\Gge(r,\PP^n_k)$ has a natural action of $\textbf{PGL}_{n+1}$
under which it is smoothly homogeneous (the stabilizer subgroup is reduced).
Assuming that it is nonempty, the
quasi-projective scheme $\Gge(r,\PP^n_k)$ is projective precisely when
$r$ equals 
$1$.  For $r=1$, this is a classical Grassmannian,
$\Gg{1}(r,\PP^n_k)=\text{Grass}(r,\PP^n_k)$, 
parameterizing
$r$-dimensional projective linear subspaces of $\PP^n_k$.  

\mni
For every (locally) closed subscheme $X\subset \PP^n_k$, the \emph{Fano
  scheme} of Veronese $e$-uple $r$-folds in $X$ is the intersection
$\Fe(r,X)$ of the open subscheme $\Gge(r,\PP^n_k)$ with the (locally)
closed subscheme $\text{Hilb}^{\pr(et)}_{X/k}$ in
$\text{Hilb}^{\pr(et)}_{\PP^n/k}$.  Even though the $V^r_e$ varieties
are among the simplest $r$-cycles in $\PP^n_k$, little is known
about $\Fe(r,X)$ for general $X$.  Here we study $\Fe(r,X)$ for
degree $d$ hypersurfaces $X$ in $\PP^n_k$.  In that case, there is a
lower bound on the dimension of every (nonempty) irreducible
component as follows,
$$
f_e(n,r,d) = f_e(n,r) - \pr(ed) = (n+1)\pr(e) - \pr(de)-(r+1)^2.
$$
Denote by $\Ne(r,d)$ the smallest integer $n\geq n_r(e)$ such that
$f_e(n,r,d)\geq 0$, i.e.,
$$
\Ne(r,d) = \max\lt(-1+\pr(e),-1+\lt\lceil \frac{(r+1)^2+\pr(ed)}{\pr(e)}
\rt\rceil \rt).
$$
It is occasionally convenient to use $M_e(r,d) = \Ne(r,d) - \pr(e)+1$,
i.e.,
$$
M_e(r,d) = \max\lt(0, \lt\lceil \frac{(r+1)^2 + \pr(ed)-\pr(e)^2}{\pr(e)}
\rt\rceil \rt).
$$
Recall that the complete linear system $\PP H^0(\PP^n_k,\OO(d))$
parameterizing degree $d$ hypersurfaces in $\PP^n_k$ is
canonically $k$-isomorphic to the Hilbert scheme
$\text{Hilb}^{P_n(t)-P_n(t-d)}_{\PP^n/k}$ of closed subschemes of
$\PP^n_k$.  Denote by $\mc{X}\subset \PP H^0(\PP^n_k,\OO(d))
\times_{\SP(k)}\PP^n_k$ the universal closed subscheme such that the
restricted projection $k$-morphism is flat,
$$
\pi:\mc{X} \to \PP H^0(\PP^n_k,\OO(d)).
$$
Denote by
$\Fe(r,\mc{X}) \subset \PP H^0(\PP^n_k,\OO(d)) \times_{\SP(k)}
\Gge(r,\PP^n_k)$ the relative Fano scheme of Veronese $e$-uple
$r$-folds in fibers of $\pi$, i.e., the intersection of the open
subscheme $\PP H^0(\PP^n_k,\OO(d))\times_{\SP(k)} \Fe(r,\PP^n_k)$ with
the relative Hilbert scheme $\text{Hilb}^{\pr(et)}_{\pi}$.  The
two projection morphisms restrict to morphisms on $\Fe(r,\mc{X})$,
$$
\rho:\Fe(r,\mc{X}) \to \Gge(r,\PP^n_k),
$$
$$
\pi:\Fe(r,\mc{X}) \to \PP H^0(\PP^n_k,\OO(d)).
$$
Denote by $\Fe(r,\mc{X})_{\text{fl}}$,
resp. $\Fe(r,\mc{X})_{\text{sm}}$, the maximal open subscheme of
$\Fe(r,\mc{X})$ on which $\pi$ is flat, resp. smooth.
Denote by
$U_{e,d}^r \subset \PP H^0(\PP^n_k,\OO(d))$, resp. $V_{e,d}^r\subset
U_{e,d}^r$, 
the maximal open subscheme
over which $\pi$ is flat, resp. smooth.  The open $U_{e,d}^r$ 
is a dense open by the Generic
Flatness Theorem.  

\begin{prop} \label{prop-inc} \marpar{prop-inc}
For all $n\geq \nee(r)$, for all $d\geq 1$, 
the morphism $\rho$ is a Zariski locally
trivial projective bundle of relative dimension $P_n(d) - \pr(de)$.
Thus the restriction of $\pi$ over $U_{e,d}^r$ either has empty fibers
or else it is flat of relative dimension $f_e(n,r,d)$.  
If the characteristic equals $0$ or $p\geq
p_e(n,r,d)$ for an effectively computable integer $p_e(n,r,d)$, then
$V_{e,d}^r$ is a dense open subset of $U_{e,d}^r$, i.e., $\pi$ is
smooth over a dense open of $U_{e,d}^r$.  
\end{prop}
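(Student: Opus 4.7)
The plan is to address the three assertions in order.

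For the projective bundle structure of $\rho$, I would consider the universal $V^r_e$-subscheme $\mathcal{V} \subset \Gge(r, \PP^n_k) \times_{\SP(k)} \PP^n_k$ with projection $p: \mathcal{V} \to \Gge(r, \PP^n_k)$. The key input is that for every Veronese $\nu: \PP^r_k \to \PP^n_k$, the pullback $\nu^*_d: H^0(\PP^n_k, \OO(d)) \to H^0(\PP^r_k, \OO(ed))$ is surjective. This follows from the defining surjectivity at $d=1$ by taking $d$-th symmetric powers and composing with the standard surjections $\text{Sym}^d H^0(\PP^n_k, \OO(1)) \twoheadrightarrow H^0(\PP^n_k, \OO(d))$ and $\text{Sym}^d H^0(\PP^r_k, \OO(e)) \twoheadrightarrow H^0(\PP^r_k, \OO(ed))$. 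Cohomology and base change then show $p_*(\OO(d)|_\mathcal{V})$ is locally free of rank $\pr(ed)$ and the evaluation $H^0(\PP^n_k, \OO(d)) \otimes_k \OO_{\Gge} \to p_*(\OO(d)|_\mathcal{V})$ is a surjection of vector bundles; its kernel $\mathcal{K}$ is a vector bundle of rank $P_n(d) - \pr(ed)$, and $\Fe(r, \mcX) \cong \PP(\mathcal{K})$ over $\Gge(r, \PP^n_k)$.

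The dimension statement for $\pi$ over $U_{e,d}^r$ then follows formally from $\dim \Fe(r, \mcX) = f_e(n,r) + P_n(d) - \pr(ed) - 1$ and $\dim \PP H^0(\PP^n_k, \OO(d)) = P_n(d) - 1$, together with the irreducibility of $\Fe(r, \mcX)$ as a projective bundle over the integral $\Gge(r, \PP^n_k)$: each flat fiber is either empty or of the expected dimension $f_e(n,r) - \pr(ed) = f_e(n,r,d)$.

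For the smoothness assertion, density of $V_{e,d}^r$ in $U_{e,d}^r$ reduces to nonemptiness of $V_{e,d}^r$ by irreducibility of $U_{e,d}^r$. In characteristic zero, generic smoothness of the morphism $\pi$ between smooth irreducible varieties produces $V_{e,d}^r$ immediately. In positive characteristic one must instead exhibit one explicit pair $(X, V) \in \Fe(r, \mcX)$ at which $d\pi$ is surjective on tangent spaces. Via the normal sequence $0 \to N_{V/X} \to N_{V/\PP^n} \to \OO_V(d) \to 0$ and the surjectivity of the restriction $H^0(\PP^n_k, \OO(d)) \to H^0(V, \OO_V(d))$ established above, this is equivalent to surjectivity of the cohomology map $H^0(V, N_{V/\PP^n}) \to H^0(V, \OO_V(d))$.

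The main obstacle is producing such a pair $(X, V)$ effectively in positive characteristic. I would take $V$ to be a standard coordinate $e$-uple Veronese embedding, for which $N_{V/\PP^n}$ admits an explicit description via the Euler sequence on $\PP^n$ restricted to $V$ combined with the monomial decomposition of sections of $\OO(e)$ on $\PP^r$. For $X$ I would take a Fermat-type degree $d$ hypersurface containing $V$, or more generally a generic element of the linear system of degree $d$ hypersurfaces through $V$; the surjectivity above then reduces to the non-vanishing modulo $p$ of certain explicit multinomial coefficients depending only on $(n, r, d, e)$. Taking $p_e(n, r, d)$ to be any integer strictly greater than the largest such coefficient then suffices, and this is manifestly an effectively computable function of $(n, r, d, e)$.
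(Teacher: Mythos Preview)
Your treatment of the projective-bundle structure of $\rho$ and the resulting dimension count for $\pi$ is correct and is essentially the paper's argument (the paper packages it as a statement about flag Hilbert schemes over an open locus $V_d$ where restriction is surjective, but the content---surjectivity via symmetric powers plus cohomology and base change---is the same). The characteristic-zero smoothness via generic smoothness also matches.

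The gap is in positive characteristic. Exhibiting a single pair $([V],[X])$ at which $d\pi$ is surjective shows only that the smooth locus $\Fe(r,\mcX)_{\text{sm}}$ is nonempty, hence dense, in the \emph{source}. It does not show that $V_{e,d}^r$ is nonempty: that open lies in the \emph{target} and consists of those $[X]$ whose \emph{entire} fiber lies in the smooth locus. In positive characteristic these conditions genuinely differ; for instance, in characteristic $2$ the family $y^2 = x^3 + t$ over $\mathbb{A}^1_t$ has smooth total space and dense $\pi$-smooth locus, yet every fiber is singular, so the analogue of $V_{e,d}^r$ is empty. Your multinomial-coefficient bound would thus at best yield the conclusion of Theorem~\ref{thm-Veronese} (dense smooth locus in the source, hence generically reduced general fiber), not the stronger assertion of Proposition~\ref{prop-inc}. (A minor additional point: your equivalence with surjectivity of $H^0(N_{V/\PP^n}) \to H^0(\OO_V(d))$ tacitly uses $H^1(N_{V/\PP^n}) = 0$, which is true here but must be checked.)

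The paper's route is different. It proves (Lemma~\ref{lem-insep}) that any irreducible component $B$ of the singular locus of $\pi$ that dominates the target must map inseparably to it. After slicing by general hyperplanes so that $B$ becomes generically finite over the target, inseparability forces the length of the generic fiber of $B$ to be at least $p$; but this length is bounded above by explicit degree data of the Hilbert schemes involved. Taking $p_e(n,r,d)$ larger than that bound forces every such $B$ not to dominate, whence $V_{e,d}^r$ is dense.
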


\mni
By the proposition, in order that $\Fe(r,X)$ is nonempty for every
degree $d$ hypersurface $X$, it is necessary that $n\geq n_r(e)$ and
$f_e(n,r,d)\geq 0$.
This is equivalent to the condition that $n\geq \Ne(r,d)$.  Also, for
the difference $m:= n- \nee(r)$, it is equivalent to the condition that
$m\geq M_e(r,d)$.  Using a theorem of Hochster-Laksov, Alex Waldron
proved that the necessary condition is sufficient for $e=1$, i.e., for
linear spaces.


\begin{thm}[Waldron, \cite{Waldron}] \label{thm-Waldron} \marpar{thm-Waldron}
For all $d\geq 3$, 
for all $n \geq \N{1}(r,d) = r+\lceil \pr(d)/(r+1) \rceil$, the
smooth locus of $\pi$ in $\F{1}(r,\mc{X})$ is dense, thus, for every
degree $d$ hypersurface $X$ in $\PP^n_k$, the Fano scheme $\F{1}(r,X)$
of linear $r$-planes in $X$ is nonempty.  For $d=1$, this is true for
all $n\geq \N{1}(r,1) = 1+\n{1}(r) = 1+r$.  For $d=2$, this is true
precisely for $n\geq 1+2r.$
\end{thm}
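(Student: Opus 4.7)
The plan is to reduce, via Proposition~\ref{prop-inc}, to exhibiting a single point of $\F{1}(r,\mcX)$ at which $\pi$ is smooth. That proposition presents $\F{1}(r,\mcX)$ as a Zariski-locally trivial projective bundle over $\Gg{1}(r,\PP^n_k) = \text{Grass}(r,\PP^n_k)$, so $\F{1}(r,\mcX)$ is smooth and irreducible. A single smooth point of $\pi$ therefore forces the smooth locus to be dense, forces $\pi$ to be dominant of relative dimension $\f{1}(n,r,d)$, and, since $\F{1}(r,\mcX)$ is closed in the projective morphism $\PP H^0(\PP^n_k,\OO(d))\times_{\SP(k)} \Gg{1}(r,\PP^n_k)\to \PP H^0(\PP^n_k,\OO(d))$, forces $\pi$ to be surjective. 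Thus every degree $d$ hypersurface contains a linear $r$-plane.

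To locate the smooth point, fix coordinates so that $\Lambda = V(x_{r+1},\ldots,x_n) \cong \PP^r_k$ and consider a defining equation of the form $F = \sum_{i=r+1}^n x_i F_i$ with $F_i \in H^0(\PP^n_k,\OO(d-1))$. A standard deformation-theoretic computation, based on presenting $\F{1}(r,\mcX)$ as the zero locus of the restriction section $(F,\Lambda)\mapsto F|_\Lambda$ inside $\PP H^0\times \Gg{1}(r,\PP^n_k)$, identifies the obstruction to smoothness of $\pi$ at $(F,\Lambda)$ with the cokernel of the global normal-derivative map
$$
\mu_\Lambda : H^0(\Lambda,N_{\Lambda/\PP^n_k}) = H^0(\PP^r_k,\OO(1))^{\oplus(n-r)} \longrightarrow H^0(\PP^r_k,\OO(d)), \qquad (g_i) \mapsto \sum_{i=r+1}^n g_i\cdot (F_i|_\Lambda),
$$
so that $\pi$ is smooth at $(F,\Lambda)$ precisely when $\mu_\Lambda$ is surjective. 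The only data of $F$ entering $\mu_\Lambda$ is the $(n-r)$-tuple $G_i := F_i|_\Lambda \in H^0(\PP^r_k,\OO(d-1))$, and any such tuple is realized by some admissible $F$, so the problem becomes one purely about multiplication of $n-r$ forms of degree $d-1$ against linear forms on $\PP^r_k$.

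The central step is now to invoke the Hochster-Laksov theorem on generic forms: for generic $G_{r+1},\ldots,G_n$ of degree $d-1$ in $r+1$ variables, the multiplication map $H^0(\OO(1))^{\oplus(n-r)} \to H^0(\OO(d))$ has maximal rank, hence is surjective as soon as the source dimension $(n-r)(r+1)$ meets or exceeds the target dimension $\pr(d)$. This inequality is exactly $n \geq \N{1}(r,d) = r+\lceil \pr(d)/(r+1)\rceil$, which is the bound in the theorem. I expect the Hochster-Laksov input to be the main -- and essentially only nontrivial -- obstacle; the deformation-theoretic calculation is routine and the passage from ``smooth at one point'' to ``surjective onto the full linear system'' is formal from Proposition~\ref{prop-inc}.

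The boundary cases $d\leq 2$ are elementary and handled separately. For $d=1$, a hyperplane contains an $r$-plane iff its defining linear form vanishes on some $r$-plane, which occurs iff $n \geq r+1$, and $\F{1}(r,X)$ is a subGrassmannian, automatically smooth. For $d=2$, the classical theory of quadratic forms yields the sharp threshold $n \geq 2r+1$: every quadric in $\PP^n_k$ contains an $r$-plane precisely when its maximal isotropic subspace has projective dimension at least $r$, i.e., when $\lfloor (n-1)/2\rfloor \geq r$, with sharpness exhibited by a smooth quadric of maximal rank in $\PP^{2r}_k$.
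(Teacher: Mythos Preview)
Your proposal is correct and follows essentially the same approach as the paper: reduce via the projective-bundle structure of $\rho$ to exhibiting a single smooth point of $\pi$, identify smoothness at $([\Lambda],[Y])$ with surjectivity of the multiplication map $H^0(\OO_\Lambda(1))^{\oplus(n-r)}\to H^0(\OO_\Lambda(d))$ determined by the normal derivatives $G_i=F_i|_\Lambda$, and invoke Hochster--Laksov to produce such a tuple when $n\geq \N{1}(r,d)$. The paper packages the deformation-theoretic step as its Lemma~\ref{lem-normal} (stated for general $e$, with your map being the $e=1$ case of the ``$e$-generating'' condition), and it does not spell out the $d=1,2$ cases beyond the statement, so your treatment of those is if anything more explicit.
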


\mni
Please note, when $d=2$, then $1+2r > \N{1}(r,2)$ for all $r\geq 2$, so
this case is special.  
Recently, Gleb Nenashev has generalized the Hochster-Laksov theorem.
Using this generalization, there is a similar result for all $e\geq 2$ for a
bound $n\geq \wNe(r,d)$, where $\wNe(r,d)$ 
is asymptotically sharp for fixed $e$, $r$, and increasing
$d$. 

\begin{thm} \label{thm-Veronese} \marpar{thm-Veronese}
For $e\geq 2$, for all $n$ at least
$\wNe(r,d) := -1 + 2\pr(e) + \lceil \pr(de)/\pr(e) \rceil$, the
smooth locus of $\pi$ in $\Fe(r,\mc{X})$ is dense. Thus, for every
sufficiently general
degree $d$ hypersurface $X$ in $\PP^n_k$, the Fano scheme $\Fe(r,X)$
of Veronese $e$-uple $r$-folds in $X$ is a nonempty, geometrically
reduced, local complete intersection scheme of dimension
$f_e(r,d,n)$. 
For $d=1$, this is true precisely for $n\geq 1+\nee(r)$.  For $d=2$,
this is true precisely for $n\geq \nee(r) = \Ne(r,2)$.
\end{thm}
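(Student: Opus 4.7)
The plan is to reduce generic smoothness of $\pi: \Fe(r, \mcX) \to \PP H^0(\PP^n_k, \OO(d))$ to the surjectivity of an explicit multiplication map on graded pieces of $k[x_0, \ldots, x_r]$, and then to invoke Nenashev's generalization of the Hochster--Laksov theorem to produce that surjectivity in the range $n \geq \wNe(r, d)$. By Proposition \ref{prop-inc}, the morphism $\rho: \Fe(r, \mcX) \to \Gge(r, \PP^n_k)$ is a Zariski-locally trivial projective bundle, so $\Fe(r, \mcX)$ is smooth and irreducible. Hence it suffices to exhibit a single pair $(Z_0, [F_0]) \in \Fe(r, \mcX)$ at which $d\pi$ is surjective; by $\PGL{n+1}$-homogeneity of $\Gge(r, \PP^n_k)$, I would fix once and for all the standard Veronese $\nu_0: \PP^r \hookrightarrow \PP^n$ and search for $F_0$ among hypersurfaces containing $Z_0 = \nu_0(\PP^r)$.

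At such a point, a tangent vector to $\Fe(r, \mcX)$ is a pair $(\delta F, \delta \nu) \in H^0(\PP^n_k, \OO(d)) \oplus H^0(\PP^r, N_{\nu_0})$ satisfying the first-order incidence relation
$$
\nu_0^*(\delta F) + \mu_{F_0}(\delta \nu) = 0 \in H^0(\PP^r, \OO(de)),
$$
where $\mu_{F_0}: H^0(\PP^r, N_{\nu_0}) \to H^0(\PP^r, \OO(de))$ is the natural contraction of $\delta \nu$ against $dF_0$ restricted to $Z_0$. Thus $d\pi$ is surjective at $(Z_0, [F_0])$ exactly when the image of $\mu_{F_0}$ contains the image of the restriction map $\nu_0^*: H^0(\PP^n_k, \OO(d)) \to H^0(\PP^r, \OO(de))$; \emph{a fortiori} it is enough to find an $F_0$ for which $\mu_{F_0}$ is surjective onto all of $H^0(\PP^r, \OO(de))$.

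Using the Euler sequences for $\PP^r$ and $\PP^n$ pulled back along $\nu_0$, I would identify $H^0(\PP^r, N_{\nu_0})$ as an explicit quotient of $H^0(\PP^n_k, \OO(1)) \otimes_k H^0(\PP^r, \OO(e))$, and under this identification rewrite $\mu_{F_0}$ as the bilinear multiplication map sending $\ell \otimes g$ to $g \cdot \nu_0^*(\partial_\ell F_0)$, where $\partial_\ell F_0 \in H^0(\PP^n_k, \OO(d-1))$ denotes the directional derivative of $F_0$ along the linear coordinate $\ell$. Surjectivity of $\mu_{F_0}$ for a carefully chosen $F_0$ is then precisely a statement about surjectivity of multiplication maps between graded components of $k[x_0, \ldots, x_r]$, which is exactly the conclusion of Nenashev's theorem; the numerical bound $n \geq \wNe(r, d) = -1 + 2\pr(e) + \lceil \pr(de)/\pr(e) \rceil$ comes directly from the hypothesis of that theorem.

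The main obstacle is the careful bookkeeping required to identify $\mu_{F_0}$ with an $F_0$-specific multiplication map on polynomial ring components, in particular handling the Euler-sequence quotient without losing surjectivity, and engineering an $F_0$ whose directional derivatives span the relevant graded piece in the sense needed by Nenashev. Once surjectivity of $\mu_{F_0}$ is secured, generic smoothness of $\pi$ follows, and the residual assertions -- nonemptiness of $\Fe(r, X)$ for generic $X$, dimension $f_e(n, r, d)$, geometric reducedness, and local complete intersection structure -- are immediate consequences of Proposition \ref{prop-inc} and standard facts about smooth morphisms. The special cases $d = 1, 2$ are handled separately: for $d = 1$, a hyperplane contains $\nu(\PP^r)$ iff its defining linear form lies in the $(n - \nee(r))$-dimensional kernel of $\nu^*_1$, so $n \geq 1 + \nee(r)$ is both necessary and sufficient; for $d = 2$, the Veronese is already cut out by quadrics inside its $\nee(r)$-dimensional linear span, so $n = \nee(r)$ already suffices and transversality can be verified directly using the catalecticant description of the ideal of $\nu_0(\PP^r)$.
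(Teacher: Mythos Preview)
Your strategy for the main case ($d\geq 3$) is essentially the paper's: reduce smoothness of $\pi$ at a single pair $([\nu(\PP^r)],[Y])$ to surjectivity of a multiplication map, then invoke Nenashev to produce that surjectivity in the stated range. One technical point you leave implicit but which the paper makes explicit and which you will want when you carry out the ``careful bookkeeping'': the paper chooses the hypersurface $Y=\text{Zero}(G)$ to contain not just $\nu(\PP^r)$ but its entire linear span $\PP(E_\nu)\cong\PP^{\nee(r)}$. Under that extra hypothesis the partials $\partial_\ell G$ in the span directions all restrict to zero on the Veronese, so your map $\mu_{F_0}$ collapses to exactly the $e$-generating condition on the linear map $\partial G_\nu^\dagger:E/E_\nu\to S_{(d-1)e}(E_r)$, and Nenashev applies verbatim with $m=\dim(E/E_\nu)=n+1-\pr(e)$. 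Without this restriction the Euler-sequence quotient step is messier; with it the engineering of $F_0$ is simply $G=\sum_i y_i H_i$ where $\nu^*H_i$ span a Nenashev subspace.

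For $d=2$ your approach diverges from the paper's. The paper does not attempt a direct transversality computation via the catalecticant ideal; instead it shows via Bertini that a \emph{smooth} quadric containing $\nu(\PP^r)$ exists (checking the relevant jet map is surjective at each point of the Veronese), and then uses that $\textbf{PGL}(E)$ acts transitively with reduced stabilizers on smooth quadrics, so the restriction of $\pi$ to the orbit of any pair $([\nu(\PP^r)],[Y])$ with $Y$ smooth is automatically a submersion. Your proposed direct verification via catalecticants is plausible but is not what the paper does, and note that the ``$G$ vanishes on the span'' trick is unavailable when $n=\nee(r)$ since then the span is all of $\PP^n$; this is precisely why the paper treats $d=2$ by a separate argument.
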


\mni  
What about irreducibility,
i.e., connectedness?  The method we use to study this, based on
Minoccheri's form of Bertini's irreducibility theorem,
cf. \cite{Minoccheri}, uses 
projective parameter spaces.  So the result works best for linear
spaces, 
$e=1$.  The integer $\Ne'(r,d)$ is
the least integer $n$ such that the complement of the smooth locus
$\Fe(r,\mc{X})_{\text{sm}}$ of $\pi$ in $\Fe(r,\mc{X})$ has
codimension $\geq 2$ everywhere.

\begin{prop} \label{prop-dim} \marpar{prop-dim}
If $n\geq \N{1}'(r,d)$, 
then for every degree $d$ hypersurface $X$ in $\PP^n_k$, 
the Fano scheme $\F{1}(r,X)$ is geometrically connected.  For
sufficiently general $X$, the Fano scheme is a local complete
intersection scheme that is geometrically integral
and normal.  If the characteristic is $0$ or $p>p_1(n,r,d)$, then for
sufficiently general $X$, the Fano scheme is also smooth.
\end{prop}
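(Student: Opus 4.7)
The plan is to apply Minoccheri's form of Bertini's irreducibility theorem \cite{Minoccheri} to the projection $\pi:\F{1}(r,\mc{X})\to \PP H^0(\PP^n_k,\OO(d))$ and then to deduce the remaining properties of individual fibers via Serre's criterion. By Proposition \ref{prop-inc} applied with $e=1$, the other projection $\rho:\F{1}(r,\mc{X})\to \Gg{1}(r,\PP^n_k)=\text{Grass}(r,\PP^n_k)$ is a Zariski locally trivial projective bundle over a smooth, projective, geometrically integral base, so $\F{1}(r,\mc{X})$ itself is smooth, projective, and geometrically integral. The target of $\pi$ is a projective space, hence smooth and normal, and the hypothesis $n\geq \N{1}'(r,d)$ is precisely the condition that the complement of $\F{1}(r,\mc{X})_{\text{sm}}$ in $\F{1}(r,\mc{X})$ has codimension at least $2$ at every point.

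Under these conditions Minoccheri's theorem yields that the generic fiber of $\pi$ is geometrically integral. Forming the Stein factorization $\pi=\pi'\circ g$ with $g$ proper with geometrically connected fibers and $\pi'$ finite, the generic fiber of $\pi'$ must be a single reduced point, so $\pi'$ is birational; since its target is normal, Zariski's Main Theorem forces $\pi'$ to be an isomorphism. Therefore every geometric fiber of $\pi$, namely $\F{1}(r,X)$ for each degree $d$ hypersurface $X$ in $\PP^n_k$, is geometrically connected, which is the first assertion.

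For sufficiently general $X$, Proposition \ref{prop-inc} places $[X]$ in the flat locus $U_{1,d}^r$, where $\pi$ is flat of relative dimension $f_1(n,r,d)$, so $\F{1}(r,X)$ is cut out of the smooth Grassmannian $\Gg{1}(r,\PP^n_k)$ by the expected number of equations; it is therefore a local complete intersection, hence Cohen--Macaulay and in particular $S_2$. A standard dimension count, using flatness of $\pi$ together with the codimension-$\geq 2$ hypothesis on the non-smooth locus $Z$ of $\pi$ in the total space, shows that for general $[X]$ the intersection $Z\cap\F{1}(r,X)$ has codimension at least $2$ in $\F{1}(r,X)$ (either $Z$ is not $\pi$-dominant and the intersection is empty, or $Z$ is dominant and its general $\pi$-fiber has dimension $\dim Z-\dim \PP H^0(\PP^n_k,\OO(d))\leq f_1(n,r,d)-2$); since over $U_{1,d}^r$ the non-smooth locus of $\pi$ cuts out precisely the singular locus of each fiber, $\F{1}(r,X)$ satisfies $R_1$ and is therefore normal by Serre's criterion. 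Combined with the geometric connectedness already established, this gives geometric integrality. Under the characteristic hypothesis, Proposition \ref{prop-inc} further gives that $V_{1,d}^r$ is dense in $U_{1,d}^r$, so a general $X$ lies in $V_{1,d}^r$ and $\F{1}(r,X)$ is smooth.

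The main obstacle is the first step: the codimension-$\geq 2$ condition packaged into $\N{1}'(r,d)$ is exactly what allows Minoccheri's theorem to deliver geometric integrality of the generic fiber, and promoting this to connectedness of \emph{every} fiber requires the Stein-factorization plus Zariski's Main Theorem argument sketched above. Once that step is in place, the remaining deductions amount to routine Serre-criterion and flatness manipulations.
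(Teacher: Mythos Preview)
Your proof is correct and follows essentially the same route as the paper: Minoccheri's theorem applied to $\pi$ gives connectedness of the geometric generic fiber, Zariski's Main Theorem (via Stein factorization) extends this to every fiber, and Serre's criterion combined with flatness handles normality and integrality of the general fiber. The only cosmetic difference is that the paper obtains the fiberwise codimension-$\geq 2$ bound on the singular locus by restricting to the open over which both $\F{1}(r,\mc{X})$ and the non-smooth locus $B$ are simultaneously flat, rather than by your explicit dimension count.
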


\mni
This connectedness result for linear spaces implies connectedness
results for more general cycles.  The following corollary is one
example of this; certainly the bound can be improved.

\begin{cor} \label{cor-dim} \marpar{cor-dim}
If $n\geq \N{1}'(d,\nee(r))$, then there exists a dense, Zariski open
subscheme $W_{e,d}^r\subset U_{e,d}^r$ such that 
for every degree $d$ hypersurface $X$ with $[X]\in W_{e,d}^r$, 
the Fano scheme $\Fe(r,X)$ is geometrically connected.  
If the characteristic is $0$ or $p>p_1(n,r,d)$, then for
sufficiently general $X$, the Fano scheme is also smooth.
\end{cor}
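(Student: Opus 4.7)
Write $m := \nee(r)$. The plan is to reduce the statement to Proposition \ref{prop-dim} applied to the Fano scheme of linear $m$-planes, via the linear-span correspondence, and then to conclude using the irreducibility of $\Fe(r,\mcX)$ provided by Proposition \ref{prop-inc}.

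Let $\mcJ := \{(X,V) \in \Fe(r,\mcX) : \langle V\rangle \subseteq X\}$, a closed subscheme. The projection $\mcJ \to \F{1}(m,\mcX)$, $(X,V) \mapsto (X,\langle V\rangle)$, is a Zariski locally trivial fiber bundle whose fiber is the smoothly homogeneous, geometrically integral variety $\Gge(r,\PP^m_k)$. By Proposition~\ref{prop-inc} applied with $e=1$, $\F{1}(m,\mcX)$ is a projective bundle over the Grassmannian $\text{Grass}(m,\PP^n_k)$, so it is geometrically integral; hence $\mcJ$ is geometrically integral. The hypothesis $n \geq \N{1}'(m,d)$ and Proposition~\ref{prop-dim} applied with $r$ replaced by $m$ produce a dense open $W_1 \subseteq \PP H^0(\PP^n_k,\OO(d))$ over which $\F{1}(m,X)$ is geometrically integral. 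For $[X] \in W_1$, the fiber $\mcJ_X$ is a $\Gge(r,\PP^m_k)$-bundle over the geometrically integral $\F{1}(m,X)$, hence is itself geometrically integral; in particular, it is nonempty and geometrically connected.

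Set $W_{e,d}^r := W_1 \cap V_{e,d}^r$, a dense open by Theorem~\ref{thm-Veronese} and Proposition~\ref{prop-dim}; over it, the morphism $\pi:\Fe(r,\mcX) \to \PP H^0(\PP^n_k,\OO(d))$ is smooth with geometrically reduced fibers of pure dimension $\fe(n,r,d)$. Since $\Fe(r,\mcX)$ is itself geometrically integral (by Proposition~\ref{prop-inc}, as a projective bundle over $\Gge(r,\PP^n_k)$), the monodromy representation of the \'etale fundamental group of $W_{e,d}^r$ acts transitively on the set of geometric connected components of the generic geometric fiber $\Fe(r,X_{\bar\eta})$. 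The closed subscheme $\mcJ$ is preserved by monodromy (being cut out by a universal condition), and $\mcJ_{X_{\bar\eta}}$ is nonempty and geometrically connected, so it lies in a single geometric connected component $C_0$; for any monodromy element $\sigma$, the inclusion $\mcJ_{X_{\bar\eta}} \subseteq \sigma(C_0) \cap C_0$ forces $\sigma(C_0) = C_0$, which combined with transitivity gives $\Fe(r,X_{\bar\eta}) = C_0$. Geometric connectedness then propagates to a dense open of $W_{e,d}^r$ by upper semicontinuity of $h^0$ on a suitable compactification of $\pi$ inside the relative Hilbert scheme. The smoothness conclusion is immediate from $W_{e,d}^r \subseteq V_{e,d}^r$ and Theorem~\ref{thm-Veronese}.

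The main obstacle I anticipate is the rigorous justification of the monodromy step, since $\pi$ is not proper when $e \geq 2$. Identifying the \'etale cover of $W_{e,d}^r$ whose geometric fibers consist of the geometric connected components of fibers of $\pi$ requires either a direct verification using smoothness of $\pi|_{V_{e,d}^r}$, or replacing $\Fe(r,\mcX)$ by a suitable normal compactification inside the relative Hilbert scheme $\text{Hilb}^{\pr(et)}_{\mcX/\PP H^0(\OO(d))}$ so that Stein factorization applies. A slicker alternative avoiding monodromy is to apply Zariski's connectedness theorem to such a compactification, using that $\mcJ$ is dominant, irreducible, and has geometrically integral fibers over $W_1$, to conclude directly that the Stein factorization over a dense open has a single geometrically connected sheet.
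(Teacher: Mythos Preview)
Your approach coincides with the paper's: both reduce to Proposition~\ref{prop-dim} for linear $\nee(r)$-planes via the linear-span correspondence, then transfer geometric irreducibility of the generic fiber from the incidence scheme $\mcJ$ (which is exactly the paper's $F_{e,1}(r,\nee(r),\mcX)$, via $\text{pr}_1$) to $\Fe(r,\mcX)$; the paper packages this transfer by citing \cite[Lemma~3.2]{dJS10}, while you argue it inline.

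Your anticipated obstacle is not genuine. What you need is not a monodromy action of $\pi_1^{\text{\'et}}(W_{e,d}^r)$ on an \'etale cover, but merely the $\mathrm{Gal}(\overline{K}/K)$-action on the geometric generic fiber $\Fe(r,X_{\bar\eta})$, and this exists without any properness. Since $\Fe(r,\mcX)$ is integral, so is its localization $\Fe(r,\mcX)_\eta$, and hence $\mathrm{Gal}(\overline{K}/K)$ is transitive on the irreducible components of $\Fe(r,X_{\bar\eta})$, hence also on its connected components; combined with your Galois-invariant, nonempty, connected $\mcJ_{\bar\eta}$, this already forces connectedness. Two small repairs: drop the intersection with $V_{e,d}^r$ and the appeal to Theorem~\ref{thm-Veronese}, since its hypothesis $n\geq\wNe(r,d)$ is not the Corollary's hypothesis and your Galois argument does not require smoothness of $\pi$; and for spreading out from the generic point to a dense open, use constructibility of the locus of geometrically irreducible fibers (as the paper does, citing \cite[Th\'eor\`eme~I.4.10]{Jou}) rather than $h^0$ on an unspecified compactification.
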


\mni
The following bound for $\N{1}'(r,d)$ is sharp to within $1$.  There are
infinitely many cases when the bound is sharp.

\begin{thm} \label{thm-nprime} \marpar{thm-nprime}
Regarding $\N{1}'(r,d)$, we have the following.
\begin{enumerate}
\item[(i)]
For all $d\geq 2$, $\N{1}(r,d)\leq \N{1}'(r,d)\leq
1+\N{1}(r,d)$, so the Fano schemes $\F{1}(r,x)$
are geometrically connected if $n\geq
1 + \N{1}(r,d)$.  
\item[(ii)]
For $d=1$, $\Ne(r,1)$ equals
$1+\nee(r)$, and the Fano schemes $\F{1}(r,X)$ are
connected for all $n\geq \nee(r)$.  
\item[(iii)]
For $d=2$, $\N{1}(r,2)$ equals
$2r+1$, $\N{1}'(r,2)$ equals $1+\N{1}(r,2)=2r+2$, and the Fano schemes
$\F{1}(r,X)$
are
geometrically \emph{disconnected} for
$n=\N{1}(r,2)=2r+1$.  
\item[(iv)]
For all $d\geq 2$, if 
$f_1(\N{1}(r,d),r,d)$ equals $0$, then $\N{1}'(r,d)$ equals
$1+\N{1}(r,d)$; in fact, the length of the finite 
Fano scheme for $n=\N{1}(r,d)$ is divisible by $d^{r+1}$.
\end{enumerate}  
\end{thm}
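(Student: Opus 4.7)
The plan is to treat the four parts with different techniques: parts (ii)--(iii) by classical facts about Fano varieties of hyperplanes and smooth quadrics; part (i) by a codimension estimate for the non-smooth locus of $\pi$, feeding into Proposition \ref{prop-dim}; and part (iv) by an intersection-theoretic computation on the Grassmannian.

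For parts (ii) and (iii), I would reduce to classical statements. When $d=1$ any degree-$1$ hypersurface $X$ is a hyperplane, so $\F{1}(r,X)$ is either empty or isomorphic to $\text{Grass}(r,\PP^{n-1}_k)$, which is geometrically integral and in particular connected; this handles (ii). When $d=2$ and $n=2r+1$, the classical description of the Fano variety of a smooth quadric $X\subset \PP^{2r+1}_k$ as the disjoint union of two isomorphic spinor varieties shows that $\F{1}(r,X)$ is geometrically disconnected; combined with the upper bound from (i) this pins $\N{1}'(r,2)$ at $2r+2$.

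For part (i), the lower bound $\N{1}'(r,d)\geq \N{1}(r,d)$ is immediate from Theorem \ref{thm-Waldron}: for $n<\N{1}(r,d)$ the smooth locus of $\pi$ fails to be dense, so its complement has codimension $0$. For the upper bound I would use the $\PGL{n+1}$-transitivity on $\text{Grass}(r,\PP^n_k)$ to reduce, with a fixed $r$-plane $\Lambda$, to bounding the codimension inside $\PP H^0(\PP^n_k,\mcI_\Lambda(d))$ of the hypersurfaces $X\supset\Lambda$ with $H^1(\Lambda,N_{\Lambda/X})\neq 0$. The standard exact sequence
$$ 0\to N_{\Lambda/X}\to \OO_\Lambda(1)^{\oplus(n-r)}\to \OO_\Lambda(d)\to 0 $$
identifies this cohomology with the cokernel of the multiplication map $H^0(\OO_\Lambda(1))^{\oplus(n-r)}\to H^0(\OO_\Lambda(d))$ whose entries are the restrictions to $\Lambda$ of the polynomials $G_i$ in any decomposition $F=\sum_i \ell_i G_i$ of the defining equation. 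Passing from $n=\N{1}(r,d)$ to $n=1+\N{1}(r,d)$ adds $r+1$ independent parameters to the source, and the heart of the argument is to upgrade the codimension-$1$ estimate implicit in Waldron's theorem to a codimension-$2$ estimate for the non-surjective locus, presumably by a direct dimension count of the failure stratum inside the parameter space $H^0(\Lambda,\OO_\Lambda(d-1))^{\oplus(n-r)}$ together with upper semicontinuity of rank. This codimension estimate is the main technical obstacle; Proposition \ref{prop-dim} then delivers the connectedness conclusion.

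For part (iv), when $f_1(\N{1}(r,d),r,d)=0$ the Fano scheme of a general hypersurface is the zero scheme of a regular section of the rank-$\pr(d)$ vector bundle $\mathrm{Sym}^d S^\vee$ on $\text{Grass}(r+1,n+1)$, where $S$ is the tautological subbundle, so its length equals $\int c_{\text{top}}(\mathrm{Sym}^d S^\vee)$. With $\alpha_1,\ldots,\alpha_{r+1}$ the Chern roots of $S^\vee$,
$$ c_{\text{top}}(\mathrm{Sym}^d S^\vee) \;=\; \prod_{\substack{d_1+\cdots+d_{r+1}=d\\ d_i\geq 0}} \bigl(d_1\alpha_1+\cdots+d_{r+1}\alpha_{r+1}\bigr). $$
The $r+1$ ``pure'' multi-indices $(d,0,\ldots,0),\ldots,(0,\ldots,0,d)$ contribute a factor of $d^{r+1}\alpha_1\cdots\alpha_{r+1}=d^{r+1}c_{r+1}(S^\vee)$, while the product over the remaining (mixed) multi-indices is a permutation-symmetric polynomial in the $\alpha_i$ with integer coefficients, hence an integer polynomial in $c_1(S^\vee),\ldots,c_{r+1}(S^\vee)$ whose pairing with the fundamental class of $\text{Grass}(r+1,n+1)$ is an integer. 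This yields divisibility of the length by $d^{r+1}$. Because Theorem \ref{thm-Waldron} guarantees the Fano scheme is nonempty, its length is positive, and for $d\geq 2$, $r\geq 1$ divisibility by $d^{r+1}\geq 4$ forces at least four geometric points; the $0$-dimensional Fano scheme is therefore geometrically disconnected at $n=\N{1}(r,d)$, and combined with (i) this yields $\N{1}'(r,d)=1+\N{1}(r,d)$.
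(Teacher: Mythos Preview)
Your treatment of parts (ii), (iii), and (iv) is essentially the same as the paper's. For (iii) the paper argues via explicit intersection numbers on the standard split quadric rather than invoking spinor varieties, but the content is identical; for (iv) the paper carries out the same Chern-root factorization $c_{\text{top}}(\text{Sym}^d Q_{r+1})=d^{r+1}c_{r+1}(Q_{r+1})\cdot\gamma$ and likewise appeals to Theorem~\ref{thm-Waldron} to see the class is nonzero.

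In part (i), however, you have correctly identified the reduction but left the decisive step open: you say the codimension-$2$ estimate for the non-surjective locus is ``the main technical obstacle'' and that it ``presumably'' follows from a direct dimension count, without carrying this out. The paper does \emph{not} attempt such a dimension count. Instead, after using $\textbf{PGL}(E)$-equivariance to reduce to showing that the singular locus $B$ meets a single fiber $\rho^{-1}([\PP(E_r)])\cong\PP H^0(\mcJ(d))$ in codimension $\geq 2$, the paper exhibits an explicit pencil $g:\PP^1\to\rho^{-1}([\PP(E_r)])$ whose image lies entirely in the smooth locus of $\pi$. Concretely: for $n=1+\N{1}(r,d)$, write coordinates $(x_0,\dots,x_r,y_0,y_1,\dots,y_m)$ with $m=\N{1}(r,d)-r$ and $\PP(E_r)=\text{Zero}(y_0,\dots,y_m)$; by Hochster--Laksov pick a $1$-generating system $G_1,\dots,G_m\in k[x_0,\dots,x_r]_{d-1}$, and set
\[
G_{a,b}\;=\;\sum_{i=1}^m (a\,y_{i-1}+b\,y_i)\,G_i(x_0,\dots,x_r).
\]
For each $[a,b]\in\PP^1$ the induced map $E/E_r\to S_{d-1}(E_r)$, $y_j^\vee\mapsto(\partial G_{a,b}/\partial y_j)|_{\PP(E_r)}$, has image equal to $\text{span}(G_1,\dots,G_m)$ (check $a\neq 0$ and $b\neq 0$ separately), hence is $1$-generating, so by Lemma~\ref{lem-normal} the pair $([\PP(E_r)],[\text{Zero}(G_{a,b})])$ is a smooth point of $\pi$. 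Since every effective Cartier divisor in a projective space meets every curve, $B\cap\rho^{-1}([\PP(E_r)])$ cannot be a divisor, giving the codimension-$2$ bound. This ``shift'' trick---using the one extra $y$-variable to deform a single $1$-generating system into an entire pencil of them---is the idea missing from your sketch.
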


\mni
The case of linear spaces deserves special attention.
In characteristic $0$, for $n\geq \N{1}'(r,d)$, for $X$ sufficiently
general, $\F{1}(r,X)$ is smooth and geometrically irreducible of the
expected dimension.  What can we say for \emph{every} smooth
hypersurface $X$?  In an appropriate degree range, there exists a
canonically defined (nonempty)
irreducible component of $\F{1}(r,X)$ of the expected dimension such
that $\F{1}(r,X)$ is reduced at the generic point of this component. 
It is convenient to introduce the flag
Fano scheme.  

\mni
For every scheme $S$ over $\SP(\QQ)$, for every
$\PP^n$-bundle $\pi:\PP_S(E)\to S$ together with an ample
invertible sheaf $q:\pi^*E^\vee \twoheadrightarrow \OO_E(1)$, for
every locally
closed subscheme $X\subset \PP^n_k$ such that $\pi:X\to S$ is locally
finitely presented, 
denote by $\F{1}(0,1,\dots,r,X/S)$ the flag Hilbert scheme of
$X$, $\text{fHilb}^{P_0(t),P_1(t),\dots,\pr(t)}_{X/S}$, parameterizing
flags of linear subspaces contained in fibers of $X$.  There is a forgetful
$S$-morphism,
$$
\rho_r:\F{1}(0,1,\dots,r-1,r,X/S)\to \F{1}(0,1,\dots,r-1,X/S).
$$
Assume now that $X$ is $S$-smooth.  Then for  
every component of $\F{1}(0,1,\dots,r,X/S)$ parameterizing flags of linear
subspace $\Lambda_0\subset \Lambda_1\subset \dots\subset\Lambda_r$ in
geometric fibers of $X/S$, there
is a lower bound $e_r(X/S)$ on the dimension of every
irreducible component of every (nonempty) fiber of $\rho_r$,
$$
e_r(X/S)= -r-1+\sum_{\ell=1}^rb_{r,\ell}\langle
\text{ch}_\ell(T_{X/S}),[\Lambda_\ell] \rangle, 
$$
where $\text{ch}_\ell(T_{X/S})$ is the graded piece of the Chern
character of homogeneous degree $\ell$ of
$T_{X/S}=(\Omega_{X/S})^\vee$, 
and where the rational numbers
$b_{r,\ell}$ are determined by 
$$
\pr(t-1) = \sum_{\ell=1}^r \frac{b_{r,\ell}}{\ell!} t^\ell.
$$
There is a natural infinitesimal deformation theory and obstruction
theory for $\rho_r$.  When the obstruction group vanishes, then
$\rho_r$ is smooth of relative dimension $e_r(X/S)$.  
There exists a sequence $(U_\ell)_{0\leq \ell\leq r-1}$ of open subschemes
$U_\ell\subset \F{1}(0,1,\dots,\ell,X/S)$ such that
\begin{enumerate}
\item[(i)] $U_0\subset X$ is the maximal open subscheme over which
  $\rho_1$ has vanishing obstruction groups so that $\rho_1$ is
  smooth of relative dimension $e_1(X/S)$ over $U_0$,
\item[(ii)] for every $\ell=1,\dots,r-1$, $U_\ell\subset
  \rho_\ell^{-1}(U_\ell)$ is the maximal open subscheme over which
  $\rho_{\ell+1}$ has vanishing obstruction groups so that
  $\rho_{\ell+1}$ is smooth of relative dimension $e_{\ell+1}(X/S)$
  over $U_\ell$.
\end{enumerate}
This sequence is compatible with arbitrary base change over $S$.  The
main result of \cite{Slinear} is the following.

\begin{prop}\cite{Slinear} \label{prop-upper} \marpar{prop-upper}
Assume that $k$ has characteristic $0$.  Let $X\subset \PP^n_k$ be a
smooth hypersurface of degree $d$.
\begin{enumerate}
\item[(i)]
If $n <
r+\pr(d-1)$, then 
$\rho_r^{-1}(U_{r-1})$ is empty.  
\item[(ii)]
If $n\geq r+ \pr(d-1)$, then 
$\rho_r^{-1}(U_{r-1})$ is nonempty, the
restriction of $\rho_r$ over $U_{r-1}$ is smooth and projective of
relative dimension $n-r-\pr(d-1)$, and each geometric fiber is a 
complete intersection in a projective space of a sequence of 
hypersurfaces whose
maximal degree equals $d$.  
\item[(iii)]
If $n$ equals $r+\pr(d-1)$ and $d>1$, 
then the 
fibers of
$\rho_r$ over $U_{r-1}$ are not geometrically connected. 
\item[(iv)]
If $n\geq 1+r+\pr(d-1)$, then every
geometric fiber of $\rho_r$ over $U_r$ is geometrically connected so
that $\rho_r^{-1}(U_r)$ is smooth and irreducible.
\end{enumerate} 
\end{prop}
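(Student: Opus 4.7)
The plan is to describe, for each flag $(\Lambda_0 \subset \dots \subset \Lambda_{r-1})$ in $U_{r-1}$, the fiber of $\rho_r$ explicitly as a complete intersection of hypersurfaces inside the $\PP^{n-r}_k$ parametrizing $r$-planes through $\Lambda_{r-1}$, and to deduce all four parts from this description. The relative tangent and obstruction spaces of $\rho_r$ at a point $[\Lambda_{r-1} \subset \Lambda_r]$ are $H^0(\Lambda_r, N_{\Lambda_r/X}(-1))$ and $H^1(\Lambda_r, N_{\Lambda_r/X}(-1))$, respectively, where the twist $\OO_{\Lambda_r}(-1) \cong \OO_{\Lambda_r}(-\Lambda_{r-1})$ encodes the constraint of fixing the hyperplane $\Lambda_{r-1} \subset \Lambda_r$. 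Smoothness of $X$ combined with the standard normal bundle sequences yields
\[ 0 \to N_{\Lambda_r/X}(-1) \to \OO_{\Lambda_r}^{\oplus(n-r)} \to \OO_{\Lambda_r}(d-1) \to 0, \]
whose cohomology long exact sequence
\[ 0 \to H^0(N_{\Lambda_r/X}(-1)) \to k^{n-r} \to k^{\pr(d-1)} \to H^1(N_{\Lambda_r/X}(-1)) \to 0 \]
has Euler characteristic $n - r - \pr(d-1) = e_r(X/S)$.

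Part (i) is then immediate: if $\rho_r^{-1}(U_{r-1})$ contained a point, smoothness of $\rho_r$ there would force $H^1 = 0$, hence $\dim H^0 = e_r < 0$, a contradiction. For parts (ii)--(iv) I would write out the fiber explicitly. Choosing coordinates with $\Lambda_{r-1} = V(x_r, \dots, x_n)$ and parametrizing $r$-planes $\Lambda_r \supset \Lambda_{r-1}$ by $[b_r : \dots : b_n] \in \PP^{n-r}_k$, a point of $\Lambda_r$ takes the form $(x_0, \dots, x_{r-1}, tb_r, \dots, tb_n)$. Substituting into the defining equation $f$ of $X$ and dividing by $t$ (possible because $f|_{\Lambda_{r-1}} = 0$), each coefficient of a monomial $x_0^{\alpha_0} \cdots x_{r-1}^{\alpha_{r-1}} t^{k-1}$ with $\sum \alpha_i = d-k$ and $1 \leq k \leq d$ yields a homogeneous equation of degree $k$ in $(b_r, \dots, b_n)$. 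Tallying gives $\binom{r-1+d-k}{r-1}$ equations of each degree $k$, for a total of $\pr(d-1)$ equations of maximal degree $d$ cutting the fiber out of $\PP^{n-r}_k$; standard dimension bounds show the fiber is nonempty of dimension at least $e_r$, and smoothness of $\rho_r$ over $U_{r-1}$ pins the dimension to exactly $e_r$ and forces the equations to form a regular sequence, proving (ii).

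Parts (iii) and (iv) follow from this description. For (iii), when $e_r = 0$ the fiber is a smooth $0$-dimensional complete intersection of total length $\prod_{k=1}^d k^{\binom{r-1+d-k}{r-1}} \geq d \geq 2$, which consists of at least two reduced points and is therefore geometrically disconnected. For (iv), when $e_r \geq 1$ the fiber is a complete intersection in $\PP^{n-r}_k$ of dimension $\geq 1$, hence geometrically connected by Hartshorne's connectedness theorem for complete intersections in projective space; combined with smoothness, this yields geometric irreducibility of each fiber, and irreducibility of $\rho_r^{-1}(U_r)$ follows inductively once one tracks irreducibility of $U_r$ through the tower $\rho_1, \dots, \rho_r$.

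The main obstacle is the functorial identification of the relative tangent and obstruction sheaves of $\rho_r$ with $H^0$ and $H^1$ of the appropriate twist of the relative normal bundle, a careful step in the deformation theory of the flag Hilbert scheme over $S$; once this is in hand, the coordinate computation of the explicit equations and the application of Hartshorne's connectedness theorem are essentially routine. The characteristic-zero hypothesis enters only through this deformation-theoretic step, ensuring that the cohomological obstruction groups computed above genuinely control smoothness of $\rho_r$.
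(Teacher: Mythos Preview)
The paper does not prove this proposition; it is imported wholesale from the author's companion paper \cite{Slinear}, so there is no in-paper argument to compare against.  Your sketch is the natural route and is almost certainly what that reference does: identify the relative tangent and obstruction spaces of $\rho_r$ with $H^0$ and $H^1$ of $N_{\Lambda_r/X}(-1)$ via the twisted normal-bundle sequence, write each fiber of $\rho_r$ explicitly as the common zero locus of $\pr(d-1)=\sum_{k=1}^d P_{r-1}(d-k)$ forms of degrees $1,\dots,d$ in the residual $\PP^{n-r}$, and read off (i)--(iv) from elementary facts about complete intersections (dimension bound, B\'ezout for the length in (iii), connectedness in positive dimension for (iv)).

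Two points to tighten.  In (i) your implication is phrased backwards: smoothness of $\rho_r$ does not in general force the obstruction group to vanish; rather, $U_{r-1}$ is \emph{defined} as the locus where the obstruction groups for $\rho_r$ vanish, so $H^1=0$ on $\rho_r^{-1}(U_{r-1})$ holds by definition, and the contradiction $e_r<0$ follows.  More substantively, your argument for (ii) takes nonemptiness of $U_{r-1}$ for granted.  This is where the inductive structure you gesture at in (iv) actually enters: one must show, stepping through $\ell=0,\dots,r-1$, that the generic flag lies in $U_\ell$, i.e.\ that the map $k^{\,n-\ell-1}\to H^0(\Lambda_{\ell+1},\mathcal{O}(d-1))\cong k^{P_{\ell+1}(d-1)}$ induced by the partial derivatives is surjective for generic $\Lambda_{\ell+1}$.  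That step has content and should be stated.  (Your remark that characteristic $0$ is needed for the deformation-theoretic identification is not quite right---that identification holds in all characteristics; the hypothesis is more plausibly used in the inductive nonemptiness/genericity argument.)
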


\mni
\textbf{Acknowledgments.}  I am grateful to Joe Harris for asking
about connectedness for Fano schemes of hypersurfaces, which led to
this paper.  This work was supported by NSF Grant DMS-1405709.


\section{Proof of Proposition \ref{prop-inc}} \label{sec-inc}
\marpar{sec-inc}

\mni
This proposition follows by the general method of incidence correspondences.  
Let $S$ be a scheme.  Let $E$ be a locally free $\OO_S$-module of rank $n+1$.  
Let $\pi_0:\PP_S(E)\to S$ together with $\pi_0^*E^\vee\to \OO_E(1)$
represent the functor that associates to every $S$-scheme, $f:T\to S$,
the set of invertible sheaf quotients on $T$ of $f^*E^\vee$.  Then
$\pi$ is a $\mathbb{P}^n$-bundle over $S$.

\mni
For every locally closed subscheme $X\subset \PP_S(E)$ that is locally
finitely presented over $S$ (automatic for Noetherian schemes), for
every integer $r$,
the
associated \emph{Fano scheme}, $\F{1}(r,X/S)$, is the Hilbert scheme
$\text{Hilb}^{\pr(t)}_{X/S}$.  
Hilbert polynomials are with respect to
the invertible sheaf $\mathcal{O}_E(1)$.  
Of course
$\F{1}(r,\mathbb{P}(E)/S)$ is the Grassmannian bundle associated to $E$,
i.e., an $S$-scheme
$\pi_r:\text{Grass}_S(r+1,E)\to S$ together with the locally free
quotient $\pi_r^*E^\vee \to Q_{E,r}$ of rank $r+1$ that represents the
functor sending $f:T\to S$ to the set of rank $r+1$  
locally free quotients  of $f^*E^\vee$.  

\mni
For every integer $d\geq 0$, denote by $S_d(E)$ the locally free sheaf
$(\pi_0)_*\OO_E(d)$, so that the direct sum
$S(E) := \bigoplus_{d\geq 0} S_d(E)$ with its natural product structure is the
homogeneous coordinate ring of $\PP_S(E)$ with respect to $\OO_E(1)$.
In other words, $S_d(E)$ is the degree $d$ symmetric power of
$E^\vee$, i.e., for the tensor algebra $T(E)$ of $E^\vee$, the algebra
quotient $T(E)\to S(E)$ is initial among morphisms of sheaves of
associative $\mathcal{O}_S$-algebras that are commutative.
Since the tensor algebra of the invertible sheaf $\mathcal{O}_E(1)$ on 
$\PP_S(E)$ is commutative, the invertible quotient 
$\pi_0^*S_1(E) \to \OO_V(1)$
induces an invertible quotient $\pi_0^*S_d(E) \to \OO_V(d)$.  Thus, on
the fiber product $\PP_S(S_d(E))\times_S\PP_S(E)$, there is a natural
morphism of invertible sheaves, 
$$
\alpha:\text{pr}_1^*\OO_{S_d(E)}(-1) \to \text{pr}_2^* \OO_E(d).  
$$
The support of the cokernel of $\alpha$ is a Cartier divisor
$\mc{X}\subset \PP_S(S_d(E))\times_S\PP_S(E)$ that is flat with
respect to $\text{pr}_1$ and has relative degree $d$ with respect to
$\text{pr}_2^*\OO_E(1)$. 

\mni
For every separated, finitely presented morphism, $\pi:Z\to S$, for
every quasi-coherent $\OO_Z$-module $\mc{E}$ that is locally finitely
presented, that is $\OO_S$-flat, and 
that has proper support over $S$, there is
a maximal open subscheme $U=U_{\pi,\mc{E}}\subset S$ such that the
complement of $U$ equals the (locally finite) 
union of the supports of $R^q\pi_*\mc{E}$
for the (locally finitely) many $q>0$ such that $R^q\pi_*\mc{E}$ is
nonzero.  By \cite[Corollaire 7.9.10, Lemme 7.9.10.1]{EGA3}, 
for every $S$-scheme $f:T\to S$, for the base change $\pi_T:Z_T\to T$
of $\pi$, and for the pullback $\mc{E}_T$ of $\mc{E}$ to $Z_T$, the
open subset $U_{\pi_T,\mc{E}_T} \subset T$ equals
$f^{-1}U_{\pi,\mc{E}}$, $(\pi_T)_*\mc{E}_T$ is a locally free
$\OO_T$-module of (locally) finite rank, and the natural map
$f^*\pi_*\mc{E}\to (\pi_T)_*\mc{E}_T$ is an isomorphism.  

\mni
In particular, for a numerical polynomial $P(t)$, for the Hilbert
scheme $\HB{P(t)}{\PP(E)/S}$ with its universal closed subscheme
$Z\subset \HB{P(t)}{\PP(E)/S}\times_S \PP_S(E)$ with its projections
$$
\pi_{E,P(t)}:Z\to \HB{P(t)}{\PP(E)/S},
$$ 
$$
\rho_{E,P(t)}:Z\to \PP_S(E)
$$
for every integer $d\geq 1$, there
exists a maximal open subscheme $U_d \subset \HB{P(t)}{\PP(E)/S}$ such
that $R^q\pi_*\rho^*\OO_{E}(d)$ equals zero on $U_d$ for all $q>0$.  On
this open subset, the sheaf $\pi_*\rho^*\OO_E(d)$ is locally free.
There is a natural base change homomorphism of $\OO_{U_d}$-modules,
$$
\phi_{d}: S_d(E)\otimes_{\OO_S} \OO_{U_d} \to \pi_*\rho^*\OO_E(d)|_U.
$$
Denote by $V_d\subset U_d$ the maximal open subscheme on which
$\phi_d$ is surjective, i.e., $V_d$ is the relative complement in
$U_d$ of the support of the cokernel of $\phi_d$.  In this case, the
kernel $\mc{K}_d$ of $\phi_d$ on $V_d$ is locally free.  Thus the dual of the
kernel, $\mc{K}^\vee_d$, is also locally free on $V_d$.  Denote by
$\kappa:\PP_{V_d}(\mc{K}_d) \to V_d$ the associated projective bundle
with its universal invertible quotient $\kappa^*\mc{K}_d^\vee \to
\OO_{\mc{K}_d}(1)$.  

\mni
Since
$\pi_*\rho^*\OO_E(d)|_U$ is locally free on $V_d$, the associated
$\OO_{V_d}$-module homomorphism
$$
\psi_d:S_d(E)^\vee\otimes_{\OO_S}\OO_{V_d}\to \mc{K}_d^\vee
$$
is surjective.  
Thus, there is a unique $S$-morphism, $\iota:\PP_{V_d}(\mc{K}_d) \to
\PP_S(S_d(E))$, such that $\iota^*\OO_{S_d(E)}(1)$ equals
$\OO_{K_d}(1)$ and such that $\psi_d$ is the induced homomorphism on
global sections of $\OO_{S_d(e)}(1)$, resp. $\OO_{K_d}(1)$.  

\mni
In the special case that the Hilbert polynomial $P(t)$ equals
$P_{n,d}(t) = P_n(t) - P_n(t-d)$, this gives the following.

\begin{lem} \label{lem-Hilb} \marpar{lem-Hilb}
The closed subscheme $\mc{X}\subset \PP_S(S_d(E))\times_S\PP_S(E)$
defines an 
isomorphism from $\PP_S(S_d(E))$ to the Hilbert scheme
$\text{Hilb}^{P_{n,d}(t)}_{\PP_S(E)/S}$. 
\end{lem}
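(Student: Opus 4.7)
The plan is to construct an explicit inverse morphism $\iota$ from the Hilbert scheme back to $\PP_S(S_d(E))$ and verify that it is inverse to the classifying morphism $\Phi$ associated to $\mc{X}$. First, I would note that $\mc{X}$ is the zero locus of a section of the invertible sheaf $\text{pr}_1^*\OO_{S_d(E)}(1)\otimes \text{pr}_2^*\OO_E(d)$, hence an effective Cartier divisor. On each geometric fiber of $\text{pr}_1$ over a point $[f]$ it restricts to the degree $d$ hypersurface in $\PP_S(E)$ cut out by the nonzero element $f\in S_d(E)$, with Hilbert polynomial $P_{n,d}(t)=P_n(t)-P_n(t-d)$. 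Hence $\text{pr}_1:\mc{X}\to\PP_S(S_d(E))$ contains no entire geometric fiber, so it is flat with the correct Hilbert polynomial, yielding a classifying morphism $\Phi:\PP_S(S_d(E))\to\HB{P_{n,d}(t)}{\PP_S(E)/S}$.

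For the inverse, let $H:=\HB{P_{n,d}(t)}{\PP_S(E)/S}$ with universal subscheme $Z\subset H\times_S\PP_S(E)$ and structure morphisms $\pi,\rho$ as in the general setup above. Specializing the apparatus of $\phi_d$, $\mc{K}_d$, and $\iota$ to the polynomial $P_{n,d}(t)$: on each geometric fiber $Z_h$, the standard resolution $0\to\OO(-d)\to\OO\to\OO_{Z_h}\to 0$ gives $\mc{I}_{Z_h}(d)\cong\OO_{\PP^n}$, so $H^0(\mc{I}_{Z_h}(d))=k$ and $H^i(\mc{I}_{Z_h}(d))=0=H^i(\OO_{Z_h}(d))$ for all $i>0$. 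The cohomology and base change result cited above then forces $U_d=V_d=H$, the pushforward $\pi_*\rho^*\OO_E(d)$ is locally free of rank $P_{n,d}(d)=P_n(d)-1$, and $\mc{K}_d$ is an invertible sheaf (since $P_n(d)-P_{n,d}(d)=1$). Because $\PP_H(\mc{K}_d)=H$, this produces the $S$-morphism $\iota:H\to\PP_S(S_d(E))$, our candidate inverse.

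Finally I would verify mutual inverseness. For $\iota\circ\Phi$, the ideal sheaf of $\mc{X}$ is $\text{pr}_1^*\OO_{S_d(E)}(-1)\otimes\text{pr}_2^*\OO_E(-d)$ by construction of $\alpha$; hence $(\text{pr}_1)_*(\mc{I}_\mc{X}(d))=\OO_{S_d(E)}(-1)$, and the induced embedding into $S_d(E)\otimes\OO_{\PP(S_d(E))}$ is the tautological inclusion. Thus $\iota\circ\Phi=\text{id}$. For $\Phi\circ\iota$, observe that on $H\times_S\PP_S(E)$ the invertible sheaf $\mc{I}_Z(d)$ has fiberwise the form $\OO_{\PP^n}$, so by the standard structure of line bundles on a projective bundle it equals $\pi^*M$ for a unique invertible $M$ on $H$, and the projection formula identifies $M$ with $\pi_*\mc{I}_Z(d)=\mc{K}_d$. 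Pulling back the universal equation $\alpha$ along $(\iota,\text{id})$ therefore recovers the inclusion $\mc{I}_Z\hookrightarrow\OO$, so $\Phi\circ\iota=\text{id}$.

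The main technical input is the fiberwise cohomology vanishing, which is immediate from the degree $d$ hypersurface resolution; the only real thing to be careful about is that cohomology and base change gives the invertibility of $\mc{K}_d$ universally, rather than just pointwise, so that $\iota$ is well-defined as an $S$-morphism compatibly with pullback, and this is precisely what the cited \cite[Corollaire 7.9.10]{EGA3} provides.
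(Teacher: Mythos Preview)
Your overall architecture matches the paper's: build the classifying morphism $\Phi$ from flatness of $\mc{X}$, then use the rank computation $P_n(d)-P_{n,d}(d)=1$ to see that $\mc{K}_d$ is invertible on $V_d$, giving an inverse $\iota$. The verification that the two composites are identities is fine.

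The genuine gap is in your second paragraph. You assert that for every geometric point $h$ of the Hilbert scheme there is a ``standard resolution'' $0\to\OO(-d)\to\OO\to\OO_{Z_h}\to 0$, i.e.\ that $\mc{I}_{Z_h}\cong\OO(-d)$. But this is precisely the statement that every closed subscheme $Z_h\subset\PP^n_\kappa$ with Hilbert polynomial $P_{n,d}(t)$ is a degree $d$ hypersurface, and that is not automatic from the Hilbert polynomial alone; it is the substantive content of the lemma. Without it you cannot conclude $U_d=V_d=H$, nor that $\mc{I}_Z$ is invertible on $H\times_S\PP_S(E)$ (which you use again in checking $\Phi\circ\iota=\text{id}$). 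The paper supplies exactly this missing step: for a geometric point $Z$, it extracts the divisorial part $Z^{(1)}$ from the height-one primary components, observes from the leading coefficient of $P_{n,d}(t)$ that $Z^{(1)}$ is a degree $d$ hypersurface, hence already has Hilbert polynomial $P_{n,d}(t)$, and concludes $Z=Z^{(1)}$. Once you insert that argument (or an equivalent one, e.g.\ via Gotzmann or via the Hilbert function forcing $h^0(\mc{I}_Z(d))\geq 1$ together with purity), your proof goes through and is essentially identical to the paper's.
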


\begin{proof}
This is well-known.  
Here is the basic idea.  First of all, since
$\mc{X}$ is a Cartier divisor in a scheme that is flat over
$\PP_S(S_d(E))$, the Cartier divisor is flat over $\PP_S(S_d(E))$ if
and only if every geometric fiber is a Cartier divisor in the
geometric fiber of $\PP_S(E)$.  This is true since $\alpha$ is nonzero
on geometric fibers.  Thus, there is an induced morphism from
$\PP_S(S_d(E))$ to the Hilbert scheme.

\mni
By the computation of
cohomology of invertible sheaves on projective space, the image of
$\PP_S(S_d(E))$ maps into the open subset $U_e$ of the Hilbert scheme
for every integer $e\geq d-n$.  By computation on geometric points of
$\PP_S(S_d(E))$, the pullback of $\phi_e$ is surjective for every
$e\geq d-n$.  Thus, the morphism to the Hilbert scheme factors through
the open subset $V_e$.  On the other hand, on the open subset $V_d$,
since $P_{n,d}(d)$ equals $P_n(d)-1$, $\psi_d$ is an invertible
quotient of the pullback of $S_d(E)^\vee$.  This invertible quotient
defines an inverse morphism from $V_d$ to $\PP_S(S_d(E))$.  

\mni
Finally, to prove that $V_d$ equals the entire Hilbert scheme, it
suffices to compute on geometric points $\SP(k)\to S$.  
For a closed subscheme
$Z\subset \PP_k(E_k)$ with Hilbert polynomial $P_{n,d}(t)$, since the
degree of Hilbert polynomial equals $n-1$, there are associated primes
of $Z$ of height $1$, and every such prime is minimal.  The
intersection of the finitely many primary components of $\OO_Z$ for
such primes gives an ideal sheaf whose associated closed scheme
$Z^{(1)}$ is a Cartier divisor in $\PP_k(E_k)$ contained in $Z$ and
that equals the divisorial part of $Z$.  Since the leading coefficient
of $P_{n,d}(t)$ equals $d/(n-1)!$, $Z^{(1)}$ has degree $d$.  As a
degree $d$ hypersurface in $\PP_k(E_k)$, the Hilbert polynomial of
$Z^{(1)}$ equals $P_{n,d}(t)$.  Thus, for the natural surjection
$\OO_Z\to \OO_{Z^{(1)}}$, the kernel has Hilbert polynomial zero.
Thus the kernel is zero, i.e., $Z$ equals the degree $d$ hypersurface
$Z^{(1)}$.  
\end{proof}

\mni
Returning to the case of an arbitrary Hilbert polynomial $P(t)$, we have
the following generalization.

\begin{prop} \label{prop-opend} \marpar{prop-opend}
Inside $\PP_{V_d}(\mc{K}_d)\times_S \PP_S(E),$ the closed subscheme
$(\kappa\times \text{Id}_{\PP(E)})^{-1}\mc{Z}$ is contained in the
closed subscheme $(\iota\times \text{Id})^{-1}\mc{X}$.  Associated to
this pair of closed subschemes, flat over $\PP_{V_d}(\mc{K}_d)$, the
induced morphism from $\PP_{V_d}(\mc{K}_d)$ to the flag Hilbert scheme
$\text{fHilb}^{P(t),P_{d,n}(t)}_{\PP(E)/S}$ is an open immersion whose open
image equals the inverse image of $V_d$ via the forgetful morphism
$\Phi_{P(t),d}:\text{fHilb}^{P(t),P_{d,n}(t)}_{\PP(E)/S} \to \HB{P(t)}{\PP(E)/S}$.  
\end{prop}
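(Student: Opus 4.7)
The plan is to render everything explicit via the tautological rank-one quotient on $\PP_{V_d}(\mcK_d)$, to produce a canonical global section whose zero scheme realizes $(\iota\times\text{Id})^{-1}\mcX$, and then to compare the two $V_d$-functors underlying the two sides of the putative isomorphism. First, I would construct a canonical section $\sigma$ of the invertible sheaf $\text{pr}_1^*\OO_{\mcK_d}(1)\otimes \text{pr}_2^*\OO_E(d)$ on $\PP_{V_d}(\mcK_d)\times_S\PP_S(E)$ as the composition of three natural morphisms: (a) the tautological inclusion $\OO_{\mcK_d}(-1)\hookrightarrow \kappa^*\mcK_d$, pulled back along $\text{pr}_1$; (b) the defining inclusion $\mcK_d\hookrightarrow S_d(E)\otimes_{\OO_S}\OO_{V_d}$; and (c) the structural surjection $\pi_0^*S_d(E)\twoheadrightarrow \OO_E(d)$, pulled back along $\text{pr}_2$. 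Unwinding the construction of $\iota$ through its defining surjection $\psi_d$, one sees that $\sigma$ is the $(\iota\times\text{Id})$-pullback of the canonical section $\alpha$ defining $\mcX$; hence the zero scheme of $\sigma$ equals $(\iota\times\text{Id})^{-1}\mcX$. For the inclusion in part (i), it suffices to check on geometric fibers: at a point $(t,[s],p)$ with $t\in V_d$, $s\in\mcK_{d,t}\subset H^0(\PP^n,\OO(d))$, and $p\in\PP^n$, the value of $\sigma$ is, up to scalar, $s(p)$, which vanishes for $p\in\mcZ_t$ by the very definition of $\mcK_d$ as the kernel of the restriction map $\phi_d$.

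Next, since $\sigma$ is not identically zero on any geometric fiber of $\text{pr}_1$ (a point of $\PP_{V_d}(\mcK_d)$ picks out a nonzero element of $\mcK_{d,t}$ up to scalar), its zero scheme $(\iota\times\text{Id})^{-1}\mcX$ is a relative Cartier divisor of relative degree $d$, hence flat over $\PP_{V_d}(\mcK_d)$ with Hilbert polynomial $P_{n,d}(t)$. Combined with the (automatic) flatness of $(\kappa\times\text{Id})^{-1}\mcZ$ and the containment from part (i), the universal property of the flag Hilbert scheme yields a canonical morphism
$$
j:\PP_{V_d}(\mcK_d)\longrightarrow \text{fHilb}^{P(t),P_{n,d}(t)}_{\PP(E)/S}.
$$
By construction, $\Phi_{P(t),d}\circ j$ factors as $\kappa$ followed by the inclusion $V_d\hookrightarrow\HB{P(t)}{\PP(E)/S}$, so $j$ factors through $\Phi_{P(t),d}^{-1}(V_d)$.

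To prove this factorization is an isomorphism, I would compare the two represented functors on $V_d$-schemes. For $f:T\to V_d$, a $T$-point of $\Phi_{P(t),d}^{-1}(V_d)$ is a pair $(W',W'')$ where $W'$ is the $f$-pullback of $\mcZ|_{V_d}$ and $W''\supset W'$ is a degree $d$ hypersurface in $\PP_T(f^*E)$. By Lemma \ref{lem-Hilb}, $W''$ corresponds to a rank-one sub-bundle $\mcL\hookrightarrow S_d(E)\otimes\OO_T$, and, by the cokernel description of $\mcX$ together with the base-change compatibility of $\phi_d$, the containment $W'\subset W''$ is equivalent to $\mcL$ factoring through $\mcK_d|_T$. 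Conversely, any rank-one sub-bundle $\mcL\hookrightarrow\mcK_d|_T$ is automatically a sub-bundle of $S_d(E)\otimes\OO_T$, because its cokernel there sits in a short exact sequence whose two outer terms, $\mcK_d|_T/\mcL$ and $(S_d(E)\otimes\OO_T)/\mcK_d|_T$, are locally free on $V_d$ by construction. These natural transformations are mutually inverse, and the functor of rank-one sub-bundles of $\mcK_d|_T$ is tautologically represented by $\PP_{V_d}(\mcK_d)\times_{V_d}T$; hence $j$ is an open immersion with image $\Phi_{P(t),d}^{-1}(V_d)$. The step I expect to require the most care is precisely this functorial comparison: transporting the sub-bundle condition across the two locally split inclusions, and aligning the three universal properties in play (Lemma \ref{lem-Hilb} for degree $d$ hypersurfaces, the kernel description of $\mcK_d$ through base change for $\phi_d$, and the flag Hilbert scheme).
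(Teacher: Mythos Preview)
Your proposal is correct and follows essentially the same strategy as the paper's proof: the containment comes from $\mcK_d$ being the kernel of $\phi_d$, and the inverse isomorphism over $V_d$ is built by identifying degree~$d$ hypersurfaces with lines in $S_d(E)$ via Lemma~\ref{lem-Hilb} and observing that containing $\mcZ$ is equivalent to that line lying in $\mcK_d$. The paper compresses all of this into a few sentences (in particular, it just says ``repeat the argument in the previous lemma'' for the inverse), whereas you spell out the section $\sigma$, the flatness, and the functor-of-points comparison explicitly; the only cosmetic point is that your ``check on geometric fibers'' for the containment is really the scheme-theoretic statement that $s\in\mcK_{d,t}$ means $s$ lies in $H^0(\mcI_{\mcZ_t}(d))$, which you have anyway.
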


\begin{proof}
By construction, on $\PP_{V_d}(\mc{K}_d)$, the defining polynomials of
$\mc{X}$, considered as sections of $S_d(E)$, vanish when restricted
to $\pi_*\rho^*\OO_E(d)|_{V_d}$.  Thus the pullback of $\mc{Z}$ is
contained in the pullback of $\mc{X}$.  Thus, there is an induced
morphism to the flag Hilbert scheme.  By construction, the image of
this morphism is contained in the inverse image of $V_d$.  Now we
repeat the argument in the previous lemma to construct an inverse
isomorphism from the inverse image of $V_d$ to $\PP_{V_d}(\mc{K}_d)$.  
\end{proof}

\mni
Since $\mc{K}_d$ is locally free of rank $P_n(d)-P(d)$, the projection
$\PP_{V_d}(\mc{K}_d)\to V_d$ is smooth of relative dimension $P_n(d)-P(d)$.
Thus, we have a corollary of the previous proposition.

\begin{cor} \label{cor-opend} \marpar{cor-opend}
The forgetful morphism $\Phi_{P(t),d}:\Phi_{P(t),d}^{-1}(V_d)\to V_d$
is smooth, even a projective bundle, of relative dimension $P_n(d)-P(d)$.  
\end{cor}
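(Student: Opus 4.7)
The plan is to derive Corollary \ref{cor-opend} as a direct consequence of Proposition \ref{prop-opend}. The heavy lifting is already done: that proposition identifies $\Phi_{P(t),d}^{-1}(V_d)$ with the projective bundle $\PP_{V_d}(\mc{K}_d)$ via an open immersion into the flag Hilbert scheme. What remains is to check that the forgetful morphism corresponds under this identification to the structure morphism $\kappa$, and then to read off the relative dimension from the rank of $\mc{K}_d$.

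First, I would unpack Proposition \ref{prop-opend} to obtain an isomorphism $\PP_{V_d}(\mc{K}_d) \xrightarrow{\sim} \Phi_{P(t),d}^{-1}(V_d)$ of $V_d$-schemes. The compatibility of the projections is essentially built into the construction of the proposition: the flag of closed subschemes of $\PP_{V_d}(\mc{K}_d)\times_S\PP_S(E)$ pulled back from the flag Hilbert scheme is $(\kappa\times\text{Id})^{-1}\mc{Z}\subset(\iota\times\text{Id})^{-1}\mc{X}$, and forgetting the outer hypersurface retains exactly the data classifying a point of $V_d$. Thus the forgetful morphism $\Phi_{P(t),d}$ on the flag Hilbert side is intertwined with the structure morphism $\kappa$ on the projective bundle side.

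Second, since $\mc{K}_d$ is locally free of finite rank on $V_d$, the morphism $\kappa$ is Zariski locally trivial and smooth, i.e., a projective bundle in the usual sense. To pin down the relative dimension, I would invoke the defining short exact sequence
$$
0 \to \mc{K}_d \to S_d(E)\otimes_{\OO_S}\OO_{V_d} \to \pi_*\rho^*\OO_E(d)|_{V_d} \to 0,
$$
in which the middle term has rank $P_n(d)=\binom{n+d}{d}$ as the $d$-th symmetric power of a rank $n+1$ sheaf, and the right-hand term is locally free of rank $P(d)$ by cohomology-and-base-change on $V_d$ (where $R^q\pi_*\rho^*\OO_E(d)=0$ for $q>0$, so that each fiber of $\pi_*\rho^*\OO_E(d)$ has dimension $\chi(\OO_E(d)|_Z)=P(d)$). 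Hence $\mc{K}_d$ has rank $P_n(d)-P(d)$, which gives the claimed relative dimension.

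There is no substantive obstacle here: once Proposition \ref{prop-opend} is granted, the corollary is just a translation between a concrete projective bundle and an open subscheme of a flag Hilbert scheme, supplemented by an elementary rank computation. The only point deserving a moment of care is confirming that the open immersion of Proposition \ref{prop-opend} really is compatible with the two forgetful morphisms to $V_d$, and this is transparent from the construction.
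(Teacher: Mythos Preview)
Your proposal is correct and follows essentially the same approach as the paper: both derive the corollary directly from Proposition \ref{prop-opend} by identifying $\Phi_{P(t),d}^{-1}(V_d)$ with the projective bundle $\PP_{V_d}(\mc{K}_d)$ and reading off the relative dimension from the rank of $\mc{K}_d$. Your version is more explicit in justifying the rank computation via the short exact sequence and in checking compatibility of the forgetful maps, but the underlying argument is identical to the paper's one-line deduction.
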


\mni
Using the corollary, the first part of the proposition is reduced to
the following result. 

\begin{lem} \label{lem-VeroneseV} \marpar{lem-VeroneseV}
The open subscheme $\Gge(r,\PP(E))$ of the Hilbert scheme is contained
in the open subscheme $V_d$ for every integer $d\geq 1$.
\end{lem}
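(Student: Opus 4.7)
The plan is to check the two defining conditions for $V_d$ pointwise on geometric points of $\Gge(r,\PP(E))$, using the explicit geometric description of a Veronese $e$-uple $r$-fold, and then invoke cohomology and base change to pass from the fiberwise statement to the open subscheme statement.

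First I would verify that $\Gge(r,\PP(E)) \subset U_d$, i.e., higher direct images of $\rho^*\OO_E(d)$ vanish at every geometric point. At a geometric point parameterizing the image $V = \nu(\PP^r)$ of some $\nu:\PP^r_k\to \PP^n_k$ with $\nu^*\OO(1)\cong \OO(e)$, the closed immersion $\nu$ identifies $\OO_V(d)$ with $\nu_*\OO_{\PP^r}(de)$, so $H^q(V,\OO_V(d)) = H^q(\PP^r,\OO(de))$ vanishes for $q>0$ since $de\geq 0$. By semicontinuity (\cite[Corollaire 7.9.10]{EGA3} as used earlier in the section), this forces $R^q\pi_*\rho^*\OO_E(d)=0$ on a neighborhood, proving containment in $U_d$.

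Next, under this containment, the base change homomorphism $\phi_d$ evaluated at the geometric point is the restriction map $H^0(\PP^n_k,\OO(d)) \to H^0(V,\OO_V(d))$, which under the identifications above becomes the composition
$$
S^d H^0(\PP^n_k,\OO(1)) \xrightarrow{S^d\nu^*_1} S^d H^0(\PP^r_k,\OO(e)) \xrightarrow{\mu} H^0(\PP^r_k,\OO(de)).
$$
The first arrow is surjective because $\nu^*_1$ is surjective by the very definition of a Veronese $e$-uple $r$-fold, and taking $d$-th symmetric powers preserves surjectivity. The second arrow $\mu$ is the classical multiplication map on sections of $\OO_{\PP^r}$, which is surjective since the section ring of $\OO_{\PP^r}(1)$ is generated in degree one. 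Composing, $\phi_d$ is surjective at the geometric point, hence surjective on a neighborhood, so $\Gge(r,\PP(E))\subset V_d$.

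The only nontrivial ingredients are the two classical surjectivities: that the pullback along a closed immersion $V\hookrightarrow \PP^n$ of degree $d$ global sections is the composition displayed above, and that the multiplication map $\mu$ on $\PP^r$ is onto. Both are standard, so I do not anticipate a real obstacle; the main care needed is in bookkeeping the identifications $\OO_V(d)\cong \nu_*\OO_{\PP^r}(de)$ and in citing cohomology and base change cleanly to pass from the pointwise vanishing and surjectivity statements to the open subscheme conclusion.
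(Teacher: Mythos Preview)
Your proposal is correct and follows essentially the same approach as the paper: reduce to geometric points, identify $\OO_V(d)$ with $\OO_{\PP^r}(de)$ to get the vanishing of higher cohomology (hence containment in $U_d$), and then factor $\phi_d$ as the $d$-th symmetric power of the surjection $\nu_1^*$ followed by the multiplication map on $\PP^r$ to get surjectivity (hence containment in $V_d$). The only cosmetic difference is that the paper dispenses with the semicontinuity/base-change discussion by observing at the outset that containment of one open subscheme in another can be checked on geometric points.
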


\begin{proof}
Since this is a statement about equality of two open subsets, this can
be checked at the level of geometric points of the Hilbert scheme.
Thus, assume that $k$ is algebraically closed, and let $\nu:\PP_k(E_r)
\to \PP_k(E)$ be a Veronese $e$-uple morphism.
For every integer $d\geq 1$, $\nu^*\OO_E(d)$
equals $\OO_{E_r}(de)$.  By the computation of cohomology of line bundles on
projective space, $h^q(\PP_k(E_r),\OO(de))$ is zero for all $q>0$ and for
all $d\geq 1$.  Thus, $\text{Image}(\nu)$ gives a point of $U_d$.
Finally, by hypothesis, 
$$
\nu^*_1:H^0(\PP(E)_k,\OO_E(1))\to H^0(\PP(E_r)_k,\OO_{E_r}(e)),
$$
is surjective.  The induced map $\phi_d$ is just the composite of the 
$d^{\text{th}}$
symmetric power of $\nu^*_1$ and the evaluation morphism,
$$
\text{Sym}^d_k H^0(\PP_k(E),\OO_E(1))\to \text{Sym}^d_k
H^0(\PP_k(E_r),\OO_{E_r}(e)) \to H^0(\PP_k(E_r),\OO_{E_r}(de)).
$$
The first factor is surjective by hypothesis, and the second factor is
surjective by the computation of cohomology of line bundles on
projective space.  Thus, $\text{Image}(\nu)$ is a point of $V_d$.  
\end{proof}

\mni
As a special case of the lemma that will be useful later, the $d=1$
result implies that the following is a short exact sequence of locally
free sheaves on $\Fe(r,\PP(E))$ compatible with arbitrary base change,
$$
0 \to \mc{K}_1 \to E^\vee\otimes_{k} \OO_{\Gge(r,\PP(E))} \to
\pi_*\rho^*\OO_E(1) \to 0.
$$
Denote the quotient by $E^\vee_G$.  Then $\PP_G(E_G)\to
\Gge(r,\PP_k(E))$ 
is a projective 
subbundle of the projective bundle
$\Gge(r,\PP_k(E))\times_{\SP(k)}\PP_k(E)$ that is flat over
$\Gge(r,\PP_k(E))$ of relative dimension $\nee(r)$.  By construction,
$\PP_G(E_G)$ contains the restriction over the open $\Gge(r,\PP_k(E))$ of the
universal closed subscheme over the entire Hilbert scheme.  
Thus, this pair of
closed subschemes gives a morphism to the flag Hilbert scheme,
$$
\Psi_{e,r,E}:\Gge(r,\PP_k(E))\to
\text{fHilb}^{\pr(et),P_{\nee(r)}(t)}_{\PP(E)/k}.  
$$
On the other hand, there is a forgetful morphism,
$$
\Phi:
\text{fHilb}^{\pr(et),P_{\nee(r)}(t)}_{\PP(E)/k}
\to
\text{Hilb}^{\pr(et)}_{\PP(E)/k}.
$$
By construction, $\Phi \circ \Psi$ is the inclusion, so that the image
of $\Psi$ is contained in $\Phi^{-1}(\Gge(r,\PP_k(E)))$.  Altogether,
this proves the following.

\begin{cor} \label{cor-VeroneseV} \marpar{cor-VeroneseV}
The forgetful morphism $\Phi:\Phi^{-1}(\Gge(r,\PP_k(E)))\to
\Gge(r,\PP_k(E))$ is an isomorphism, and the pullback via the inverse
isomorphism $\Psi$ of the 
universal linear $\nee(r)$-fold containing the Veronese is $\PP(E_G)$,
the family of linear spans of the Veronese $e$-uple $r$-folds.
\end{cor}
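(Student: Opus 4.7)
The plan is to promote the already-constructed section $\Psi$ to a scheme-theoretic inverse of $\Phi$ over $\Phi^{-1}(\Gge(r,\PP_k(E)))$. Since $\Phi \circ \Psi$ is the canonical inclusion and $\Phi$ is proper (as a forgetful morphism between flag Hilbert schemes), $\Psi$ is automatically a closed immersion. It therefore suffices to verify that $\Phi$ has geometrically reduced singleton fibers with trivial residue-field extensions over $\Gge(r,\PP_k(E))$: a proper morphism with such fibers is radicial and unramified, hence a closed immersion, which then must be an isomorphism because $\Psi$ provides a right inverse.

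To analyze the fiber of $\Phi$ over a geometric Veronese $V \subset \PP_k(E_k)$, I note that it parameterizes closed subschemes $L$ with Hilbert polynomial $P_{\nee(r)}(t) = \binom{t+\nee(r)}{\nee(r)}$ containing $V$. The Gotzmann number of this Hilbert polynomial equals $1$, so Gotzmann's regularity theorem forces any such $L$ to be $1$-regular, yielding $h^1(\mc{I}_L(1)) = 0$ and $\dim H^0(\mc{I}_L(1)) = n - \nee(r)$. From $V \subset L$ one has $H^0(\mc{I}_L(1)) \subseteq H^0(\mc{I}_V(1))$, and the latter also has dimension $n - \nee(r)$ as the kernel of the defining surjection $\nu^*_1$ in the Veronese definition; hence the two subspaces of $E^\vee$ coincide, so $L$ is contained in their common vanishing locus, the linear $\nee(r)$-plane $L(V)$ cut out by $H^0(\mc{I}_V(1))$, and equality of Hilbert polynomials forces $L = L(V)$ as schemes. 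Running exactly the same Gotzmann bound over $k[\epsilon]/(\epsilon^2)$ upgrades containment of $k$-vector spaces to equality of rank-$(n-\nee(r))$ free $k[\epsilon]$-submodules, so every first-order deformation of $L(V)$ still containing $V\times_k \text{Spec}(k[\epsilon]/\epsilon^2)$ is trivial; the tangent space to the fiber at $L(V)$ vanishes, and the fiber is a reduced point.

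For the second assertion, the short exact sequence displayed just before the corollary identifies $E_G^\vee$ with $\pi_*\rho^*\OO_E(1)$, whose fiber over each Veronese $V$ is $H^0(V,\OO_V(1))$, canonically the space of linear forms on the linear span $L(V)$. Thus $\PP_G(E_G) \to \Gge(r,\PP_k(E))$ is the universal family of linear spans, and by construction of $\Psi$ it represents the pullback of the universal $P_{\nee(r)}(t)$-member of the flag Hilbert scheme. The main obstacle is the clean invocation of Gotzmann regularity in the one-parameter family over $k[\epsilon]$ for the reducedness step; a fallback is to cite the classical identification $\Hilb^{P_{\nee(r)}(t)}_{\PP(E)/k} \cong \text{Grass}(\nee(r)+1, E)$, under which the fiber of $\Phi$ becomes a Schubert-type locus whose singletonness and reducedness at $L(V)$ follow immediately from the factorization $E^\vee \twoheadrightarrow H^0(V,\OO_V(1)) \twoheadrightarrow H^0(L,\OO_L(1))$ as a chain of equal-rank surjections.
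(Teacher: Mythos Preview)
Your proof is correct, and the overall strategy matches the paper's, though you supply considerably more detail than the paper does. The paper treats the corollary as immediate from the preceding construction, writing only ``By construction, $\Phi \circ \Psi$ is the inclusion \dots\ Altogether, this proves the following,'' and leaves implicit the key point that $\Phi$ has reduced singleton fibers over $\Gge(r,\PP_k(E))$ (i.e., the uniqueness of the linear span). You make this explicit.

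Your primary route via Gotzmann regularity is valid over a field---the Gotzmann number of $P_{\nee(r)}(t)=\binom{t+\nee(r)}{\nee(r)}$ is indeed $1$, forcing $\mc{I}_L$ to be $1$-regular and hence generated in degree $1$, so $L$ is linear. The extension of the Gotzmann bound to $k[\epsilon]/(\epsilon^2)$ for the reducedness step is not standard as stated, however. Your fallback via the classical identification $\Hilb^{P_{\nee(r)}(t)}_{\PP(E)/k}\cong \text{Grass}(\nee(r),\PP_k(E))$ handles both the set-theoretic and the scheme-theoretic fiber computation at once and is closer to what the paper presumably intends: under that identification the fiber of $\Phi$ over $[V]$ is the sub-Grassmannian of rank-$(\nee(r)+1)$ quotients of $E^\vee$ factoring through the rank-$(\nee(r)+1)$ quotient $E^\vee\twoheadrightarrow H^0(V,\OO_V(1))$, which is scheme-theoretically a single reduced point. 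Either way, the logical skeleton---$\Psi$ is a section of the separated (in fact projective) morphism $\Phi$, hence a closed immersion; $\Phi$ is proper, radicial, and unramified over $\Gge(r,\PP_k(E))$, hence a closed immersion; the two closed immersions are then mutually inverse---is sound.
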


\mni
The last part of Proposition \ref{prop-inc} follows in characteristic
$0$ 
by Generic
Smoothness.  Of course the characteristic $0$ result implies that
there exists some integer $p_e(n,r,d)$ such that the result also holds
whenever the characteristic $p$ satisfies $p\geq p_e(n,r,d)$.  In
fact, this integer is effectively computable, even though the
effective upper bounds here are probably far from optimal.  The key is
the following observation.

\begin{lem} \label{lem-insep} \marpar{lem-insep}
Let $k$ be a field.  Let $S$ and $T$ be smooth, integral $k$-schemes.
Let $f:S\to T$ be a dominant morphism.  For every irreducible
component $B$ of the singular locus of $f$ (defined via Fitting ideals
of $\Omega_f$) endowed with its induced reduced structure, 
if $B$ dominates $T$, then $B\to T$ is not separable.
\end{lem}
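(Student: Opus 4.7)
The plan is to argue by contradiction at the generic point $\eta_B$ of $B$, using the cotangent and conormal exact sequences to derive a numerical contradiction. Since $B$ dominates $T$, the point $\eta_B$ maps to the generic point $\eta_T$ of $T$; set $d_S=\dim S$, $d_T=\dim T$, $d_B=\dim B$, and write $K=\kappa(\eta_B)=K(B)$.

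First I would analyze the cotangent exact sequence
$$f^*\Omega_{T/k} \xrightarrow{\alpha} \Omega_{S/k} \to \Omega_{S/T} \to 0$$
restricted to the fiber at $\eta_B$ over $K$. The $k$-smoothness of $S$ and $T$ gives $\dim_K \Omega_{S/k}|_{\eta_B} = d_S$ and $\dim_K f^*\Omega_{T/k}|_{\eta_B} = d_T$. Setting $r := \dim_K \ker(\alpha_{\eta_B})$, the hypothesis that $\eta_B$ lies in the Fitting-ideal singular locus of $f$ translates precisely to $r \geq 1$ (equivalently, $\dim_K \Omega_{S/T}|_{\eta_B}>d_S-d_T$), and exactness yields $\dim_K \Omega_{S/T}|_{\eta_B} = d_S - d_T + r$.

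Next I would use the conormal exact sequence for the closed immersion $B \hookrightarrow S$ of $T$-schemes,
$$\mc{I}_B/\mc{I}_B^2 \to \Omega_{S/T}|_B \to \Omega_{B/T} \to 0,$$
tensored with $K$ at $\eta_B$. Since $S$ is $k$-smooth and $B$ has codimension $d_S - d_B$, the local ring $\OO_{S,\eta_B}$ is regular of dimension $d_S - d_B$, so the fiber $\mfm/\mfm^2$ of $\mc{I}_B/\mc{I}_B^2$ at $\eta_B$ has $K$-dimension $d_S - d_B$. Assuming for contradiction that $B\to T$ is separable, $K(B)/K(T)$ is a separable field extension, and the identification $\Omega_{B/T}|_{\eta_B} = \Omega_{K/K(T)}$ together with the transcendence basis criterion gives $\dim_K \Omega_{B/T}|_{\eta_B} = d_B - d_T$. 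Right exactness of the conormal sequence then forces
$$d_S - d_T + r \;\leq\; (d_S - d_B) + (d_B - d_T) \;=\; d_S - d_T,$$
so $r\leq 0$, contradicting $r\geq 1$.

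The argument is essentially a linear-algebra dimension count, so I do not anticipate a serious obstacle. The one point warranting care is the translation of the Fitting-ideal definition of the singular locus into $r\geq 1$, which is immediate once one observes that $\Omega_{S/T}$ has generic fiber rank $d_S-d_T$ on the complement of the singular locus. The edge case $B=S$, which occurs precisely when $f$ itself is already inseparable, is harmless: there $\mfm=0$ and $d_S-d_B=0$, and the same numerical inequality still fails.
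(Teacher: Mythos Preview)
Your argument is correct. Both your proof and the paper's reduce to the same linear algebra of differentials at the generic point of $B$, but they package it through different exact sequences. The paper works with the absolute cotangent maps over $k$: it observes that $d(f|_B)^\dagger:(f|_B)^*\Omega_{T/k}\to\Omega_{B/k}$ factors through $\Omega_{S/k}|_B\to\Omega_{B/k}$, so any point where $f|_B$ is smooth (equivalently, where $d(f|_B)^\dagger$ is a fiberwise injection) is automatically a point where $(df)^\dagger$ is a fiberwise injection, hence a smooth point of $f$. Since $B$ lies in the singular locus of $f$, the smooth locus of $f|_B$ is empty, and in particular $f|_B$ is not generically smooth, i.e.\ not separable. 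You instead work with the relative differentials over $T$, using the conormal sequence $\mathcal{I}_B/\mathcal{I}_B^2\to\Omega_{S/T}|_B\to\Omega_{B/T}\to 0$ together with an explicit dimension count at $\eta_B$. The paper's factorization argument is a little slicker in that it avoids bookkeeping of ranks, while your version has the virtue of making the role of the codimension of $B$ in $S$ (through the regularity of $\mathcal{O}_{S,\eta_B}$) completely explicit.
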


\begin{proof}
Denote by $S^o\subset S$, resp.
$B^o\subset B$, the $k$-smooth locus of $f$, resp. of $f|_B$.
Then $(df)^\dagger:f^*\Omega_{T/k}\to \Omega_{S/k}$, resp.
$d(f|_B)^\dagger:(f|_B)^*\Omega_{T/k}\to \Omega_{B/k}$, is a local split injection
with locally free cokernel
on $S^o$, resp. $B^o$.  Since $d(f|_B)^\dagger$
factors through $\Omega_{S/k}|_B\to
\Omega_{B/k}$, it follows that $B^o$ is contained in $S^o\cap B$.   
Since $B$ is
disjoint from $S^o$, $B^o$ is empty.  Therefore $f|_B$ is not
separable.
\end{proof}

\mni
In case $S$ is a specific quasi-projective $T$-scheme, up to
intersecting $S$ with a sufficiently general collection of hyperplane
sections, it suffices to assume that $B$ is generically finite over
$T$.  Then, since $B\to T$ is not separable, the length of
$\OO_{B,\eta}$ as an $\OO_{T,\eta}$-module, $\eta$ a generic point of
$T$, is at least $p$.  On the other hand, there are upper bounds on
the length of the singular locus of the zero-dimensional components of
the singular locus of $S$ in terms of the dimension and degree of $S$, cf.
\cite[Section 4.2]{JanGutt}.  Using this, it is possible to find an
effective upper 
bound on $p_e(n,r,d)$ in terms of dimensions and degrees of Hilbert
schemes.    


\section{Proof of Theorem \ref{thm-Veronese}} \label{sec-Veronese}
\marpar{sec-Veronese}

\mni
The proof of the main part of the theorem is very similar to the proof
of the theorem of Alex Waldron \cite{Waldron}.

\mni
Since smoothness can be checked after base
change from $k$ to an algebraic closure, assume that $k$ is
algebraically closed.  As above, assume that $E$ is a $k$-vector space
of rank $n+1$ so that $(\PP_k(E),\OO_E(1))$ is $k$-isomorphic to
$\PP^n_k$ with its Serre twisting sheaf.

\mni
Let $E_r$ be a $k$-subspace of rank
$r+1$, and let $\nu:\PP_k(E_r)\hookrightarrow \PP_k(E)$ denote a
Veronese $e$-uple morphism.
Denote by $\mc{J}$ the
corresponding ideal sheaf.
In particular, the $k$-subspace $J_1:=H^0(\PP_k(E),\mc{J}(1))$ of
$E^\vee = H^0(\PP_k(E),\OO_{E}(1))$ equals the kernel of the
surjection, 
$$
\nu^*_1: H^0(\PP_k(E),\OO_{E}(1)) \to H^0(\PP_k(E_r),\OO_{E_r}(e)).
$$
This is the same as the pullback of the sheaf $\mc{K}_1$ from the
previous section.
The annihilator of $J_1$ is a linear subspace $E_\nu\subset E$ of
dimension $\pr(e)$, the pullback of $E_G$ from the previous section.  
The subvariety $\PP_k(E_\nu) = \text{Zero}(J_1)$
of $\PP_k(E)$ is the unique linear subvariety of dimension $\nee(r)$ 
that contains the image
of $\nu$, i.e., $\PP_k(E_\nu)$ is the linear span of $\nu$.  In
particular, for the ideal sheaf $\mc{J}_1$ of $\PP_k(E_\nu)$ in $\PP_k(E)$,
$\mc{J}_1|_{\PP_k(E_\nu)}$ equals $J_1\otimes_k\OO_{E_\nu}(-1)$.  

\mni
The fundamental exact sequence of sheaves of relative differentials
is,
$$
\begin{CD}
0 @>>> \nu^*\mc{J} @> \delta >> \nu^*\Omega_{\PP(E)/k} @> (d\nu)^\dagger >>
\Omega_{\PP(E_r)/k} @>>> 0. 
\end{CD}
$$
Using the Euler exact sequence, $\nu^*\mc{J}$ is identified with the
locally free sheaf of rank $n-r$, that is the kernel of the associated
surjective morphism
$$
\wt{d\nu}^\dagger: E^\vee\otimes_k \OO_{E_r}(-e)\to E_r^\vee\otimes_k \OO_{E_r}(-1).
$$
Via the factorization of $\nu$ through $\PP_k(E_\nu)$, there is an
associated short exact sequence for $\nu^*\mc{J}$,
$$
0\to J_1\otimes_k \OO_{E_r}(-e) \to \nu^*\mc{J} \to \nu^*\mc{J}_{>1} \to 0,
$$
where $\mc{J}_{>1}$ is the ideal sheaf of $\text{Image}(\nu)$ in
$\PP_k(E_\nu)$. 

\mni 
Denote by
$\Delta_r(e)\subset \ZZ_{\geq 0}^{r+1}$  the subset of
$\underline{e}=(e_0,e_1,\dots,e_r)$ with $e_0+e_1+\dots+e_r$ equal to
$e$.  This set has size $\pr(e)$.  Denote by $m$ the difference $n+1-\pr(e)$.
Denoting by $(t_0,\dots,t_r)$ a basis for $E_r^\vee$, and denoting by
$(y_1,\dots,y_m)$ a basis for $J_1$, this extends to  
a basis for $E^\vee$,
$$
(x_{\underline{e}})_{\underline{e} \in \Delta_r(e)}\sqcup
(y_1,\dots,y_m),
$$
such that for every $\underline{e}=(e_0,e_1,\dots,e_r)$,
$$\nu_1^*x_{\underline{e}} = t_0^{e_0}t_1^{e_1}\cdots
t_r^{e_r}.
$$ 
Then the restriction
$k$-algebra homomorphism $\nu^*:
S(E)\twoheadrightarrow S(E_r)$ is the composition of the
quotient $k[x_{\underline{e}},y_j]\to k[x_{\underline{e}}]$ by the graded ideal
generated by $(y_1,\dots,y_m)$ and the natural surjection
$k[x_{\underline{e}}] \to k[t_0,\dots,t_r]_{(e)}$, where
$k[t_0,\dots,t_r]_{(e)}$ is the graded $k$-subalgebra $\oplus_{d\geq
  0} k[t_0,\dots,t_r]_{de}$ of $k[t_0,\dots,t_r]$.  
The linear space $\text{Zero}(y_1,\dots,y_m)$ equals the linear span of
the image of $\nu$, $\text{Span}(\nu)$.
  
\mni
The identity map $S_1(E)\to E^\vee$ extends uniquely to a
$k$-derivation that also preserves graded decompositions,
$$
\partial : S(E)\to E^\vee\otimes_k S(E)[-1].
$$
This $k$-derivation defines a graded isomorphism of $S(E)$-modules,
$$
\Omega_{S(E)/k} \to E^\vee\otimes_k S(E)[-1].
$$ 
Similarly, the identity map $S_1(E_r)\to E_r^\vee$ defines a graded
isomorphism of $S(E_r)$-modules,
$$
\Omega_{S(E_r)/k} \to E_r^\vee\otimes_k S(E_r)[-1].
$$
In particular, if $e$ is prime to the characteristic,
then the derivation in degree $e$,
$$
S_e(E_r) \to E_r^\vee\otimes_k S_{e-1}(E),
$$
defines a surjection of $\OO_{\PP(E_r)}$-modules,
$$
\partial_e:S_e(E_r)\otimes_k\OO_{\PP(E_r)} \to E_r^\vee\otimes_k\OO_{E_r}(e-1).
$$
Twisting, this gives an isomorphism,
$$
\nu^*\mc{J}_{>1}(e) \cong \text{Ker}(\partial_e).
$$
If the characteristic does divide $e$, then $\partial_e$ has cokernel
isomorphic to $\OO_{\PP(E_r)}(e)$, and then there is a short exact
sequence,
$$
0 \to \nu^*\mc{J}_{>1}(e) \to \text{Ker}(\partial_e) \to \OO_{E_r}(e)
\to 0.
$$

\begin{lem} \label{lem-H1J} \marpar{lem-H1J}
Each of $\nu^*\mc{J}_1$, $\nu^*\mc{J}$, and $\nu^*\mc{J}_{>1}$ is a
locally free $\OO_{\PP(E_r)}$-module.  Moreover, for each, $h^1$ of
the dual locally free sheaf is zero.
\end{lem}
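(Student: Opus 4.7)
The plan is to treat each of the three sheaves as the kernel of a surjection of locally free sheaves on $\PP_k(E_r) \cong \PP^r_k$, which forces local freeness, and then to compute $h^1$ of the dual via the long exact sequence of cohomology together with the standard vanishing $H^i(\PP^r_k, \OO(d)) = 0$ for $0 < i < r$, supplemented by the obvious boundary cases on $\PP^1_k$.

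For $\nu^*\mc{J}_1 = J_1\otimes_k \OO_{E_r}(-e)$, local freeness is immediate, and $h^1$ of the dual $J_1^\vee\otimes_k \OO_{E_r}(e)$ vanishes by $H^1(\PP^r_k, \OO(e)) = 0$ for $e\geq 1$, $r\geq 1$. For $\nu^*\mc{J}$, the tangent/cotangent and Euler sequences already assembled in the text give
\[
0 \to \nu^*\mc{J} \to E^\vee\otimes_k \OO_{E_r}(-e) \to E_r^\vee\otimes_k \OO_{E_r}(-1) \to 0,
\]
so $\nu^*\mc{J}$ is locally free as the kernel of a surjection of locally free sheaves. Dualizing and taking cohomology sandwiches $H^1((\nu^*\mc{J})^\vee)$ between $H^1(\OO_{E_r}(e))$ and $H^2(\OO_{E_r}(1))$, both of which vanish for $r\geq 1$ and $e\geq 1$.

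For $\nu^*\mc{J}_{>1}$, the cleanest route is the observation that, since $\nu$ factored through $\PP_k(E_\nu)$ is a closed immersion of smooth varieties, $\nu^*\mc{J}_{>1}$ coincides with the conormal sheaf $\mc{J}_{>1}/\mc{J}_{>1}^2$ of $\nu(\PP_k(E_r))$ in $\PP_k(E_\nu)$, hence is locally free. For $h^1$ of the dual normal bundle $\mc{N}$, use the exact sequence
\[
0 \to T_{\PP(E_r)/k} \to \nu^* T_{\PP(E_\nu)/k} \to \mc{N} \to 0,
\]
which reduces the claim to $H^1(\nu^* T_{\PP(E_\nu)/k}) = 0$ and $H^2(T_{\PP(E_r)/k}) = 0$. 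Both follow from the Euler sequences on $\PP_k(E_\nu)$ (pulled back via $\nu$, using $\nu^*\OO_{E_\nu}(1) = \OO_{E_r}(e)$) and on $\PP_k(E_r)$, together with the vanishing of middle cohomology of line bundles on $\PP^r_k$. The one point requiring care is characteristic-dependence: the alternative identification $\nu^*\mc{J}_{>1}(e) \cong \mathrm{Ker}(\partial_e)$ set up in the text splits into cases according to whether $p \mid e$, whereas the tangent/cotangent and Euler argument above is characteristic-free, making the proof uniform in $p$, $e$, and $r$.
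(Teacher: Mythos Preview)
Your proof is correct and takes a genuinely different route from the paper's for the sheaf $\nu^*\mc{J}_{>1}$ (and, as a consequence, for $\nu^*\mc{J}$). The paper handles $\nu^*\mc{J}_{>1}$ via the explicit identification $\nu^*\mc{J}_{>1}(e)\cong\mathrm{Ker}(\partial_e)$ set up just before the lemma; dualizing the defining sequence of $\partial_e$ gives $0\to E_r\otimes\OO_{E_r}(1)\to S_e(E_r)^\vee\otimes\OO_{E_r}(e)\to\mathrm{Ker}(\partial_e)^\vee(e)\to 0$ when $p\nmid e$, with an extra $\OO$ on the left when $p\mid e$, and then the paper reads off $h^1=0$ from the long exact sequence. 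It then deduces the vanishing for $(\nu^*\mc{J})^\vee$ from that for $(\nu^*\mc{J}_{>1})^\vee$ via the filtration by $J_1\otimes\OO(-e)$. By contrast, you treat $(\nu^*\mc{J})^\vee$ directly from the Euler-type kernel sequence, and you treat $(\nu^*\mc{J}_{>1})^\vee=\mc{N}$ via the tangent sequence $0\to T_{\PP(E_r)}\to\nu^*T_{\PP(E_\nu)}\to\mc{N}\to 0$ together with the pulled-back Euler sequence on $\PP(E_\nu)$. Your argument is uniform in the characteristic and slightly more conceptual; the paper's argument is more hands-on and makes the link to the derivation $\partial_e$ explicit, at the cost of the $p\mid e$ case split. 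In fact the two computations are closely related: since $E_\nu\cong S_e(E_r)^\vee$, the paper's dual sequence in the $p\nmid e$ case is essentially your pulled-back Euler sequence modulo the common $\OO$-subbundle, so the two approaches are different packagings of the same underlying vanishing.
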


\begin{proof}
Via the identifications above, it is straightforward to compute that
each sheaf is locally free.  Moreover, since $\nu^*\mc{J}_1^\vee$ is
isomorphic to $J_1^\vee\otimes_k \OO_{E_r}(e)$, all of the higher
cohomology groups of this sheaf are zero.  Via the long exact sequence
of cohomology, $h^1$ of $\nu^*\mc{J}^\vee$ equals zero if $h^1$ of
$\nu^*\mc{J}_{>1}^\vee$ equals zero.  Via the isomorphisms and via the
vanishing of higher cohomology of $\OO_{\PP(E_r)}$, $h^1$ of
$\nu^*\mc{J}_{>1}^\vee$ equals zero if $h^1$ if
$\text{Ker}(\partial_e)^\vee(e)$ equals zero.  If the characteristic
is prime to $e$, resp. divides $e$, then we have an exact
sequence,
$$
0 \to E_r\otimes_k \OO_{E_r}(1) \to S_e(E_r)^\vee\otimes_k
\OO_{E_r}(e) \to \text{Ker}(\partial_e)^\vee(e) \to 0,
$$
resp. we have an exact sequence,
$$
0 \to \OO_{\PP(E_r)}\to E_r\otimes_k \OO_{E_r}(1) \to S_e(E_r)^\vee\otimes_k
\OO_{E_r}(e) \to \text{Ker}(\partial_e)^\vee(e) \to 0.
$$
In each case, using the vanishing of all higher cohomology of
$\OO_{E_r}(\ell)$ for $\ell > -r$, the long exact sequence of
cohomology implies that $h^1$ of $\text{Ker}(\partial_e)^\vee(e)$
equals $0$.  
\end{proof}

\mni
For every $G\in S_d(E)$, i.e., for every global section of
$\OO_E(d)$, the associated adjoint map,
$$
\partial G^\dagger : E\to S_{d-1}(E),
$$ 
sends each dual basis vector of $E$, 
$x_{\underline{e}}^\vee$, resp. $y_i^\vee$ to the partial
derivative $\partial G/\partial x_{\underline{e}}$, resp. $\partial
G/\partial y_i$.    
Assume now that $G$ vanishes on $\text{Span}(\nu)$.  (N.B.  When
$e=1$, this hypothesis is trivially satisfied.  
For $e>1$, one could try to
improve the bound in the theorem by dropping this hypothesis.)
Denote by $i:Y\hookrightarrow
\PP_k(E)$ the zero scheme of $G$, and denote by $\mc{I}_Y$ the
corresponding ideal sheaf of $\OO_{\PP(E)}$.  
Multiplication by $G$ defines an
isomorphism of $\OO_{\PP(E)}$-modules, $\OO_{E}(-d)\to \mc{I}_Y$.
Thus the fundamental exact sequence of sheaves of relative
differentials becomes,
$$
\begin{CD}
0 @>>> i^*\OO_E(-d) @> \partial G >> i^*\Omega_{\PP(E)/k} @> (di)^\dagger >>
\Omega_{Y/k} @>>> 0. 
\end{CD}
$$
Pulling back to $\PP_k(E_\nu)$ 
and using transitivity for relative
differentials, there is a commutative diagram, 
$$
\begin{CD}
\OO_{E_\nu}(-d) @> \nu^*\partial G >> \Omega_{\PP(E)/k}|_{\PP(E_\nu)} \\
@V \partial G_{\nu} VV @VVV \\
\mc{J}_1 @> \delta >> \Omega_{\PP(E_r)/k}
\end{CD}
$$
Via the identification of $\mc{J}_1$, the homomorphism $\partial
G_{\nu}$ 
is
equivalent to a homomorphism,
$$
\OO_{E_\nu}(-d) \to (E/E_\nu)^\vee\otimes_k \OO_{E_\nu}(-1),
$$
Up to a twist and taking the transpose, this is equivalent to a homomorphism,
$$
\partial G_{\nu,1}^\dagger: (E/E_\nu)\otimes_k \OO_{\PP(E_\nu)} \to \OO_{E_\nu}(d-1).
$$ 
Via adjointness of pushforward and pullback, this is equivalent to a
homomorphism of $k$-vector spaces,
$$
E/E_\nu \to S_{d-1}(E_\nu).
$$
By abuse of notation, this is also denoted by $\partial G_\nu^\dagger$.
This map fits into a commutative diagram,
$$
\begin{CD}
E @> \partial G^\dagger >> S_{d-1}(E) \\
@VVV @VVV \\
E/E_\nu @>> \partial G_{\nu,1}^\dagger > S_{d-1}(E_\nu)
\end{CD},
$$
where the vertical arrows are the natural surjections.
Composing with the surjection $\nu_e^*$, 
this map induces a $k$-linear
map,
$$
G_\nu^\dagger:E/E_\nu \to S_{(d-1)e}(E_r)
$$

\mni
For every integer $c\geq 0$, for every integer $b\geq 0$, for every
$k$-vector space $W$ and $k$-linear map $\phi:W\to
S_{b}(E)$, there is an associated $k$-linear map
$$
\phi_c:W\otimes_k S_c(E) \to S_{b+c}(E).
$$
obtained from the multiplication on $S(E)$.  The $k$-linear map $\phi$
is $c$-\emph{generating} if $\phi_c$ is surjective,
cf. \cite[Definition 7.2]{HS2}.

\begin{lem} \label{lem-normal} \marpar{lem-normal}
Under the above hypothesis that the degree $d$ hypersurface $Y$
contains the linear span $\PP(E_\nu)$ of $\nu$, 
the smooth locus $Y^o$ of the $k$-scheme $Y$ contains
$\text{Image}(\nu)$, 
resp.
$\PP(E_\nu)$, if and only if 
the linear system $\partial G_\nu^\dagger$ on $\PP(E_r)$,
resp. the linear system $\partial G_{\nu,1}^\dagger$ on $\PP(E_\nu)$, 
is $c$-generating for some $c\geq 1$.  When $G^o$ contains $\text{Image}(\nu)$,
$p$ is smooth at the point corresponding to the pair
$([\text{Image}(\nu)],[Y])$ if and only if $\partial G_\nu^\dagger$ 
is $e$-generating.
\end{lem}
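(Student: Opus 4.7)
The plan is to translate each equivalence into a statement about the common vanishing locus of partial derivatives of $G$, and then to invoke the standard fact that a linear system $\phi:W\to S_b(F)$ (for any finite-dimensional $k$-vector space $F$) has empty base locus on $\PP_k(F)$ if and only if $\phi_c$ is surjective for some integer $c\geq 1$, equivalently for every sufficiently large $c$. This last equivalence is a form of the projective Nullstellensatz: the base locus in $\PP_k(F)$ is the common zero set of $\bigcup_c \text{Image}(\phi_c)$, which is empty precisely when the ideal generated by $\text{Image}(\phi)$ in $S(F)$ contains $S_{\geq N}(F)$ for some $N$.

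First I would exploit the hypothesis that $G$ vanishes on $\PP_k(E_\nu)$, i.e., $G\in J_1\cdot S_{d-1}(E)$. For every $x_{\underline{e}}^\vee$ dual to a basis vector of $E_\nu^\vee$, the partial derivative $\partial G/\partial x_{\underline{e}}^\vee$ lies in $J_1\cdot S_{d-2}(E)$ and so vanishes on $\PP_k(E_\nu)$, whereas for each $y_i\in J_1$ the partial $\partial G/\partial y_i^\vee$ restricts to an element of $S_{d-1}(E_\nu)$. Collected as $i$ ranges over a basis of $J_1=(E/E_\nu)^\vee$, these produce exactly the linear system $\partial G_{\nu,1}^\dagger:E/E_\nu\to S_{d-1}(E_\nu)$; pulling back further via $\nu^*$ produces $G_\nu^\dagger:E/E_\nu\to S_{(d-1)e}(E_r)$. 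The Jacobian criterion therefore gives $\PP_k(E_\nu)\subset Y^o$ iff $\partial G_{\nu,1}^\dagger$ is base-point-free on $\PP_k(E_\nu)$, and $\text{Image}(\nu)\subset Y^o$ iff $G_\nu^\dagger$ is base-point-free on $\PP_k(E_r)$. The Nullstellensatz equivalence then closes the first two parts.

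For the last assertion I would pass to infinitesimal deformation theory. Assuming $\text{Image}(\nu)\subset Y^o$, the conormal sequence gives
$$
0 \to N_{\text{Image}(\nu)/Y} \to \nu^*\mc{J}^\vee \to \OO_{E_r}(de) \to 0,
$$
in which the second arrow is induced by $\nu^*\partial G$. By Lemma \ref{lem-H1J}, $h^1$ of $\nu^*\mc{J}^\vee$ vanishes, and $H^i(\OO_{E_r}(de))=0$ for all $i>0$ since $de\geq 0$; hence $H^1(N_{\text{Image}(\nu)/Y})=0$ precisely when the induced map $H^0(\nu^*\mc{J}^\vee)\to S_{de}(E_r)$ is surjective. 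By Proposition \ref{prop-inc}, $\Fe(r,\mc{X})$ is smooth at every point over $\Gge(r,\PP_k(E))$, so smoothness of $\pi$ at $([\text{Image}(\nu)],[Y])$ is equivalent to surjectivity of $d\pi$ there, equivalent to smoothness of the fiber $\Fe(r,Y)$ at $[\nu]$, equivalent to the stated $H^0$-surjectivity.

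To identify this $H^0$-map with $(G_\nu^\dagger)_e$, I would dualize the sequence $0\to J_1\otimes\OO_{E_r}(-e)\to \nu^*\mc{J}\to \nu^*\mc{J}_{>1}\to 0$ and use the $h^1$-vanishings in Lemma \ref{lem-H1J} to split the resulting long exact sequence, obtaining, after the identification $J_1^\vee=E/E_\nu$,
$$
H^0(\nu^*\mc{J}^\vee) \;\cong\; H^0(\nu^*\mc{J}_{>1}^\vee)\;\oplus\;(E/E_\nu)\otimes_k S_e(E_r).
$$
The first summand parameterizes infinitesimal deformations of $\text{Image}(\nu)$ remaining inside the fixed linear span $\PP_k(E_\nu)\subset Y$; for these $\nu^*G$ stays identically zero, so the composite with $\nu^*\partial G$ vanishes on this summand. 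On the second summand the map is exactly $(G_\nu^\dagger)_e:(E/E_\nu)\otimes_k S_e(E_r)\to S_{de}(E_r)$, so surjectivity of the whole $H^0$-map is equivalent to $G_\nu^\dagger$ being $e$-generating, as claimed. The main obstacle I expect is this last splitting-and-vanishing step: it requires identifying $\nu^*\mc{J}_{>1}^\vee$ as the normal sheaf of $\text{Image}(\nu)$ inside $\PP_k(E_\nu)$ and then using $\PP_k(E_\nu)\subset Y$ to force the contribution of that summand to be zero, with everything compatible with the splitting supplied by Lemma \ref{lem-H1J}.
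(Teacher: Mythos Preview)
Your argument is correct and follows the paper's route: Jacobian criterion plus graded Nullstellensatz for the first claim, and the normal-bundle exact sequence together with Lemma~\ref{lem-H1J} for the second. Your factorization of $H^0(\nu^*\mc{J}^\vee)\to S_{de}(E_r)$ through $(E/E_\nu)\otimes_k S_e(E_r)$ --- using that the map kills $H^0(\nu^*\mc{J}_{>1}^\vee)$ because $G$ vanishes on all of $\PP(E_\nu)$ --- is precisely the identification the paper records tersely as ``$\text{Coker}(\partial G_j^\dagger)_1\cong H^1$'' without further comment, so your worry there is unfounded; note too that you do not actually need a non-canonical splitting, since vanishing on the subspace already gives a canonical factorization through the quotient. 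One minor imprecision: the intermediate clause ``equivalent to smoothness of the fiber $\Fe(r,Y)$ at $[\nu]$'' is not a standalone equivalence (a Hilbert scheme can be smooth at a point where the obstruction group is nonzero); the clean chain is surjectivity of $d\pi$ iff its cokernel vanishes iff $H^1(N)=0$, which is exactly what the paper spells out at the end of its proof.
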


\begin{proof}
By the Jacobian criterion, $Y^o$ equals the maximal open subscheme of
$Y$ on which $\Omega_{Y/k}$ is locally free of rank $n-1$.  By
Nakayama's Lemma, the points of $Y$ are precisely those $\SP \kappa
\to Y$ where the pullback of $\partial G^\dagger$ is nonzero.  By the
commutative diagram, this is equivalent to nonvanishing of the
pullback of $\partial G_j^\dagger$.  Finally, using the equivalence
between the category of coherent sheaves on $\PP_k(E_r)$ and the
localization of the
category of finitely presented, graded $S(E_r)$-modules with respect
to modules concentrated in low degrees, 
this pullback is nonvanishing at every point of $\PP_k(E_r)$,
resp. $\PP_k(E_\nu)$, 
if and only if $\partial G_\nu^\dagger$, resp. $\partial
G_{\nu,1}^\dagger$ 
is $c$-generating for some
$c\geq 1$.

\mni
Next, assume that $\partial G_c^\dagger$ is $c$-generating for some
$c\geq 1$.  Then for the ideal sheaf $\mc{K}$ of $\PP(E_r)$ in $Y$,
the commutative diagram gives a short exact sequence,
$$
\begin{CD}
0 @>>> \OO_{E_r}(-de) @>\partial G_j >> \nu^*\mc{J} @>>>
\nu^*\mc{K} @>>> 0 
\end{CD}
$$
Since $Y^o$ contains $\text{Image}(\nu)$, the closed immersion
$\nu\PP(E_r)\hookrightarrow Y^o$ is a \emph{regular immersion}.  Thus
the usual obstruction group for deformations of this closed immersion,
$\text{Ext}^1_{\OO_Y}(\mc{K},\nu_*\OO_{\PP(E_r)})$ reduces to
$$
H^1( \PP(E_r), \textit{Hom}_{\OO_{\PP(E_r)}}(\nu^*\mc{K},\OO_{\PP(E_r)})).
$$  
Since
$\nu^*\mc{K}$ is locally free, the transpose of the short exact sequence
above is still a short exact sequence.  By Lemma \ref{lem-H1J}, 
the long exact sequence defines
an isomorphism
$$
\delta:\text{Coker}(\partial G_j^\dagger)_1 \xrightarrow{\cong} 
H^1( \PP(E_r), \textit{Hom}_{\OO_{\PP(E_r)}}(j^*\mc{K},\OO_{\PP(E_r)})).
$$
Thus, the obstruction group vanishes if and only if $\partial
G_\nu^\dagger$ is $e$-generating.  

\mni
Of course there are cases where the
obstruction group is nonzero, yet the relative 
Hilbert scheme is still smooth.
However, in this case, both the domain and the target of the morphism
$p$ are smooth $k$-schemes.  
The obstruction group is the cokernel of the map induced by
$p$ from the Zariski
tangent space of $\Fe(r,\mc{X}/\PP_k(S_d(E)))$ to the Zariski tangent
space of $\PP_k(S_d(E))$.  Thus, by the Jacobian criterion,
$p$ is smooth at $([\text{Image}(\nu)],[Y])$ 
if and
only if $\partial G_\nu^\dagger$ is $e$-generating.
\end{proof}

\mni
By Hochster-Laksov \cite[Theorem 1]{HochsterLaksov}, for $e=1$, for
all $d\geq 3$, for all 
$n\geq \N{1}(r,d) = r+\lceil \pr(d)/(r+1) \rceil$, there exists a linear
system of dimension $m=n-r$ in $S_{d-1}(E_r)$ that is $1$-generating,
say
$$
E/E_\nu \to S_{d-1}(E_r), \ \ y_i^\vee \mapsto G_i(t_0,\dots,t_r).
$$
Recall that the basis for $E^\vee$ is $(x_0,\dots,x_r)\sqcup
(y_1,\dots,y_m)$, where $\nu^*x_i$ equals $t_i$.  
For the polynomial
$$
G = \sum_{i=1}^m y_i G_i(x_0,\dots,x_r),
$$
the zero scheme, $Y$, of $G$ 
contains $\PP(E_r)$, and $\partial G_\nu^\dagger$ is the given
$1$-generating linear system.  Thus, by Lemma \ref{lem-normal}, $\pi$ is
smooth at the pair $([\PP(E_r)],[Y])$.  This proves Theorem
\ref{thm-Waldron}, and this is basically Waldron's proof.  In fact,
Waldron also gives a simplified proof of Hochster-Laksov in this case.

\mni
Next, for $e\geq 2$, for all $d\geq 3$, it is a theorem of Gleb
Nenashev, \cite[Theorem 1]{Nenashev}, 
that for all integers $m=n-\pr(e)$ satisfying
$m\geq \pr(e) + \lceil \pr(de)/\pr(e) \rceil$, there exists an
$e$-generating linear system,
$$
E/E_\nu \to S_{(d-1)e}(E_r), \ \ y_i^\vee \mapsto G_i(t_0,\dots,t_r).
$$
For each $i$, since $\nu^*_{d-1}$ is surjective, there exists $H_i\in
k[x_{\underline{e}}]_{d-1}$ such that $\nu^*{d-1}(H_i)$ equals $G_i$.
For the polynomial
$$
G = \sum_{i=1}^m y_i H_i,
$$
the zero scheme, $Y$, of $G$ contains $\PP(E_\nu)$, and $\partial
G_\nu^\dagger$ is the given $e$-generating linear system.  Thus, by
Lemma \ref{lem-normal}, $\pi$ is smooth at the pair $([\PP(E_r)],[Y])$.  
This proves the Theorem \ref{thm-Veronese} for $e\geq 2$ and for
$d\geq 3$.

\mni
For $d=1$, for all $n\geq 1+n_r(e)$, for every hypersurface $Y$ that
contains $\text{Image}(\nu)$, $\Fe(r,Y) \cong \Gge(r,\PP^{n-1}_k)$ is
nonempty and smooth.  For $d=2$ and for $e\geq 2$, there are smooth
quadric surfaces that contain $\text{Image}(\nu)$, assuming that $k$
is algebraically closed (it would suffice for $k$ to be infinite).  
This follows most easily from Bertini's theorem.  Since $r\geq 1$,
also $2r+1 \geq 2$.  Thus, $\pr(2) \geq 2r+2$.  By Pascal's Theorem,
$\pr(t+1) - \pr(t)$ equals $P_{r-1}(t+1)$.  For $e\geq -r$, resp. for
$e\geq -1$,
$P_{r-1}(e+1)\geq 0$, resp. $P_{r-1}(e+1)>0$, 
so that the integer-valued function
$\pr(e)$ is nondecreasing, resp. increasing, in $e$ for $e\geq -r$,
resp. $e\geq -1$.  Thus, for all $e\geq 2$, $\pr(e)\geq \pr(2) \geq
2r+2$.  Thus, for $n\geq n_r(e) = \pr(e)-1$, $n$ is strictly larger
than $2r$, $\pr(e)-1 \geq 1+2r$.  Thus, by the usual parameter
counting 
proof of
Bertini's theorem, to prove that a general member $G$ in
$H^0(\PP_k(E),\mc{J}(2))$ is defines an everywhere smooth quadric, it
suffices to prove for every $k$-point $p\in \PP_k(E_r)$ that the
induced map,
$$
H^0(\PP_k(E),\mc{J}(2)) \to T_{\nu(p)} \PP_k(E) / d\nu( T_p \PP_k(E_r)), 
$$
is surjective.  

\mni
Choose homogeneous coordinates on $\PP_k(E_r)$ so
that $p$ equals $[t_0,t_1,\dots,t_r] = [1,0,\dots,0]$, and then choose
corresponding homogeneous coordinates $(x_{\underline{e}},y_i)$ on
$\PP_k(E)$ as above.
Then $\nu(p)$
is the point where the coordinate $x_{(e,0,\dots,0)}\neq 0$, yet
$x_{\underline{e}}=0$ for every $\underline{e} \in
\Delta_r(e)\setminus\{ (e,0,\dots,0)\}$, 
and $y_i=0$ for every $i=1,\dots,m$.  

\mni
The tangent space of $T_p\PP_k(E_r)$ is the space
spanned by the partial derivatives
$\partial/\partial(x_{\underline{e}}/x_{(e,0,\dots,0)})$ for the
elements $\underline{e} = (e-1,0,\dots,0,1,0,\dots,0)$.  The quotient
space is generated by the partial derivatives for
$y_i/x_{(e,0,\dots,0)}$ for $i=1,\dots,m$, and by the partial
derivatives of $x_{\underline{e}}/x_{(e,0,\dots,0)}$, where
$\underline{e}=(e_0,e_1,\dots,e_r)$ satisfies $e_0 \leq e-2$.  
For every
$i=1,\dots,m$, the quadratic polynomial $y_i x_{(e,0,\dots,0)}$ maps
to the image of the partial derivative for $y_i/x_{(e,0,\dots,0)}$.
For every $\underline{e}\in \Delta_r(e)$ with $e_0\leq e-2$, there
exist elements 
$\underline{e}', \underline{e}''\in \Delta_r(e)$ with $e_0',e''_0\leq
e-1$ such that $\underline{e}+(e,0,\dots,0) =
\underline{e}'+\underline{e}''$.  Thus the quadratic polynomial
$x_{\underline{e}}x_{(e,0,\dots,0)} -
x_{\underline{e}'}x_{\underline{e}''}$ maps to the image of the
partial derivative for $x_{\underline{e}}/x_{(e,0,\dots,0)}$.  Thus,
by Bertini's Theorem, there exists $G\in H^0(\PP_k(E),\mc{J}(2))$ such
that $Y=\text{Zero}(G)$ is everywhere smooth.  The action of
$\textbf{PGL}(E)$ on the open subset $\PP S_2(E)\setminus \Delta$
parameterizing smooth quadrics is smoothly homogeneous.  This action
lifts to an action of $\textbf{PGL}(E)$ on
$\Fe(r,\mc{X})$.  Thus, whenever $Y$ is smooth, the restriction of $\pi$
to the $\textbf{PGL}(E)$-orbit of $([\text{Image}(\nu)],[Y])$ is
smooth, cf. \cite[Corollaire 6.5.2(i)]{EGA4}.

\mni
By the lemma, if $\pi$ is smooth, then $n-r \geq n_0-r$.  Conversely,
assume that $n \geq n_0$.  Then there exists a $k$-linear map,
$$
\phi:E/E_r \to S_{d-1}(E_r)
$$
that is $1$-generating.  For the image of every dual vector $t_i^\vee$
in $E/E_r$, denote by $G_i\in S_{d-1}(E_r)$ the image of this element
under $\phi$.  For every $(n-r)$-tuple
$(\widetilde{G}_{r+1},\dots,\widetilde{G}_n)$ of elements 
$\widetilde{G}_i\in S_{d-1}(E)$ 
that maps to $G_i$, 
the element
$$
G = \widehat{G} + t_{m+1}\widetilde{G}_1 + \dots + t_n\widetilde{G}_n
$$
is an element of $S_d(E)$ that vanishes on $\PP(E_r)$.  By definition
of $\partial G_j^\dagger$ in terms of partial derivatives, this equals
$\phi$.  Thus, $([\PP(E_r)],[Y])$ is a point where $p$ is smooth.
This proves the first part of the proposition: the smooth locus of $p$
is nonempty (and hence dense) if and only if $n\geq n_0$.  


\section{Proof of Proposition \ref{prop-dim}} \label{sec-dim}
\marpar{sec-dim}

\mni
In this section, fix $e$ to equal $1$.
Let $n$ equal $n_1'(d,r)$, and
denote $m=n_1'(d,r)-r$.  The morphism $\rho:\Fe(r,\mc{X}) \to
\Gge(r,\PP(E))$ is a Zariski locally trivial projective bundle.
Moreover, both domain and target have natural actions of
$\textbf{PGL}(E)$, and the morphism is equivariant for these actions.
Finally, the morphism $\pi$ is also equivariant.  Thus the closed
subscheme $B$ where $\pi$ is not smooth is
$\textbf{PGL}(E)$-invariant.  Since $\Gge(r,\PP(E))$ is homogeneous
under the action of $\textbf{PGL}(E)$, the restriction of $\rho$ to
$B$ is flat.  
Thus, the hypothesis that $B$ has codimension $\geq 2$
everywhere is equivalent to the hypothesis that the intersection of
$B$ with one, and hence every, geometric fiber of $\rho$ has
codimension $\geq 2$ everywhere in that fiber.  Since the geometric
fibers of $\rho$ are projective spaces, this is equivalent to the
hypothesis that 
there exists a finite
morphism from $\PP^1$ to a geometric fiber of $\rho$ whose image is
disjoint from $B$.

\mni
Now we apply the proof of Bertini's Connectedness Theorem, as
generalized by Cristian Minoccheri.  For the $k$-morphism,
$$
\pi:\F{1}(r,\mc{X}) \to \PP_k(S_d(E)),
$$
the source and target are both smooth, projective $k$-schemes, and the
target is algebraically simply connected, since it is a projective
space.  By hypothesis, the complement $B$ of the smooth locus of $\pi$ has
codimension $\geq 2$ everywhere.  
Thus, by \cite[Theorem 3.1]{Minoccheri}, 
the geometric generic fiber of $\pi$ is connected.
Finally, by Zariski's Main Theorem, since the geometric generic fiber
of $\pi$ is connected, every geometric fiber of $\pi$ is connected.

\mni
For the maximal open subscheme $U$ of $\PP_k(S_d(E))$ over which both
$\F{1}(r,\mc{X})$ and $B$ are flat, the restriction of 
$\pi$ over $U$ is a flat morphism whose domain and target are both
smooth, hence $\pi|_U$ is a flat, local complete intersection morphism
\cite[Appendix B.7.6]{F}.  So every geometric fiber of $\pi$ over $U$ is a
projective, local complete intersection scheme.  Moreover, the
singular locus equals the intersection of the fiber with $B$, and this
has codimension $\geq 2$ by hypothesis.  Thus, by Serre's Criterion,
the geometric fiber is integral and normal, cf. \cite[Th\'{e}or\`{e}me
5.8.6]{EGA4}.  As in
Proposition \ref{prop-inc}, if the characteristic equals $0$ or $p\geq
p_e(n,r,d)$, then, up to replacing $U$ by a dense, Zariski open
subscheme, $\pi|_U$ is even smooth.


\section{Proof of Corollary \ref{cor-dim}} \label{sec-dim2}
\marpar{sec-dim2}

\mni
Recall from Corollary \ref{cor-VeroneseV} that there exists a
universal family of linear spans $\PP_G(E_G) \subset
\Gge(r,\PP_k(E))\times_{\SP(k)} \PP_k(E)$ of the universal family of
Veronese varieties.  This projective subbundle contains the universal
family of Veronese $e$-uple $r$-folds.  The pair defines a morphism to
the flag Hilbert scheme,
$$
\Psi_{e,r,E}:\Gge(r,\PP_k(E))\to
\text{fHilb}^{\pr(et),P_{\nee(r)}(t)}_{\PP(E)/k},
$$
whose image is contained in the inverse image open subset
$\Phi^{-1}(\Gge(r,\PP_k(E)))$, where $\Phi$ is the forgetful morphism
$$
\Phi:
\text{fHilb}^{\pr(et),P_{\nee(r)}(t)}_{\PP(E)/k}
\to
\text{Hilb}^{\pr(et)}_{\PP(E)/k}.
$$
Now consider the second forgetful morphism,
$$
\Lambda:\Phi^{-1}(\Gge(r,\PP_k(E))) \to \Gg{1}(\nee(r),\PP_k(E)).
$$
Using the action of $\textbf{PGL}(E)$, the morphism $\Lambda$ is a
Zariski locally trivial fiber bundle whose fiber over a $\kappa$-valued
point 
$[\PP_\kappa(E')]\in
\Gg{1}(\nee(r),\PP_k(E))(\SP(\kappa))$ equals $\Gge(r,\PP_\kappa(E'))$.  In
particular, $\Lambda$ is faithfully flat, finitely presented,
quasi-projective and smooth with geometrically irreducible fibers.  

\mni
Now let $d\geq 1$ be an integer.  As usual, denote by $\mc{X}\subset
\PP H^0(\PP_k(E),\OO_E(d))\times_{\SP(k)} \PP_k(E)$ the universal
family of degree $d$ hypersurfaces in $\PP_k(E)$.  
Consider the projection
$$
\rho:\F{1}(\nee(r),\mc{X}) \to \Gg{1}(\nee(r),\PP_k(E)).
$$
Denote by $F_{e,1}(r,\nee(r),\mc{X})$ the fiber product,
$$
\begin{CD}
F_{e,1}(r,\nee(r),\mc{X}) @>\text{pr}_1 >> \Phi^{-1}(\Gge(r,\PP_k(E))) \\
@V \text{pr}_2 VV @VV \Lambda V \\
\F{1}(\nee(r),\mc{X}) @>> \rho > \Gg{1}(\nee(r),\PP_k(E)).
\end{CD}
$$
Chasing diagrams, $F_{e,1}(r,\nee(r),\mc{X})$ is an open subset of the
relative flag Hilbert scheme of $\pi:\mc{X}\to \PP
H^0(\PP_k(E),\OO_E(d))$ parameterizing pairs of closed subschemes in
fibers of $\pi$ of Hilbert polynomials $\pr(et)$,
resp. $P_{\nee(r)}(t)$.  More precisely, this is the open subset of the
flag Hilbert scheme parameterizing pairs where the smaller closed
subscheme is a Veronese $e$-uple $r$-fold, and where the larger closed
subscheme is the linear span of the Veronese variety.  In particular,
because $\Lambda$ is faithfully flat, finitely presented,
quasi-projective and smooth with geometrically irreducible fibers, the
same holds for the base change morphism,
$$
\text{pr}_2 : F_{e,1}(r,\nee(r),\mc{X}) \to \F{1}(\nee(r),\mc{X}).
$$

\mni
Since $n\geq n'_1(d,\nee(r))$, the projection morphism
$$
\pi':\F{1}(\nee(r),\mc{X})\to \PP H^0(\PP_k(E),\OO_E(d))
$$
is projective and dominant with irreducible geometric generic fiber,
by Proposition \ref{prop-dim}.  Combined with the previous paragraph,
also the composition
$$
F_{e,1}(r,\nee(r),\mc{X}) \xrightarrow{\text{pr}_2} \F{1}(\nee(r),\mc{X})
\xrightarrow{\pi'} \PP H^0(\PP_k(E),\OO_E(d)),
$$
is quasi-projective and dominant with irreducible geometric generic
fiber.  By the definition of the flag Hilbert scheme, this composition
also equals the composition
$$
F_{e,1}(r,\nee(r),\mc{X})\xrightarrow{\text{pr}_1} \Fe(r,\mc{X})
\xrightarrow{\pi} \PP H^0(\PP_k(E),\OO_E(d)).
$$
By Proposition \ref{prop-inc}, $\Fe(r,\mc{X})$ is smooth and
irreducible, even a projective bundle over $\Gge(r,\PP_k(E))$.  In
particular, the image of $\text{pr}_1$ is contained in the normal
locus of $\Fe(r,\mc{X})$.  Thus, by \cite[Lemma 3.2]{dJS10},
also the morphism
$$
\pi:\Fe(r,\mc{X})\to \PP H^0(\PP_k(E),\OO_E(d))
$$
is dominant with irreducible geometric generic fiber.  By the usual
constructibility argument, cf. \cite[Th\'{e}or\`{e}me I.4.10]{Jou},
there exists a
dense open subset $W^r_{e,d}$ of $\PP H^0(\PP_k(E),\OO_E(d))$ over which $\pi$ is
faithfully flat with geometrically irreducible fibers.


\section{Proof of Theorem \ref{thm-nprime}} \label{sec-nprime}
\marpar{sec-nprime}

\mni
\textbf{Proof of (i).} Since $d\geq 2$, also $d-1\geq 1$.  Thus
the difference $m=n-r-1$ satisfies
$m\geq n_1(d,r)-r$, and this is at least $r+1$.  
In particular, $m$ is
positive.  

\mni
Choose
homogeneous coordinates $(x_0,\dots,x_r,y_0,y_1,\dots,y_m)$ on
$\PP_k(E)$ so that $\PP_k(E_r)$ equals the zero scheme of
$(y_0,y_1,\dots,y_m)$.  
Assume that $n\geq 1+n_1(r)$.  Then by Hochster-Laksov once again, 
there exists a $1$-generating
$k$-subspace 
$\iota:W\hookrightarrow 
k[t_0,\dots,t_r]_{d-1}$ 
of dimension $m$.  Let $(w_1,\dots,w_m)$ be an ordered basis
for $W$, and denote by $G_i(t_0,\dots,t_r)$ the image $\iota(w_i)$.  
Define $W'=k^{\oplus (m+1)}$, and define
$$
\psi:(k^{\oplus 2}) \times (k^{\oplus(m+1)}) \to W, 
$$
$$
\psi((a,b),(c_0,\dots,c_m)) = (ac_0+bc_1)w_1 + (ac_1+bc_2)w_2 +
\dots + (ac_{m-1}+bc_m)w_m.
$$
This is a $k$-bilinear map.
When $a$ is nonzero, then the restriction of $\psi_{(a,b),\bullet}$ to the
subspace $\text{Zero}(c_m)$ is an isomorphism, so that the image is
$1$-generating.  When $b$ is nonzero, then the restriction of
$\psi_{(a,b),\bullet}$ to the subspace $\text{Zero}(c_0)$ is an
isomorphism.  Thus, defining
$$
G_{a,b}(x_i,y_j) =
(ay_0+by_1)\Gg{1}(x_0,\dots,x_r) + \dots + (ay_{m-1}+by_m)G_m(x_0,\dots,x_r),
$$
and defining $Y_{a,b} = \text{Zero}(G_{a,b}) \subset \PP_k(E)$,
there is a morphism
$$
g:\PP^1_k \to \F{1}(r,\mc{X}), \ \ [a,b] \mapsto ([\PP_k(E_r)],[Y_{a,b}]),
$$
that is a finite morphism into the fiber of $\rho$ over $[\PP_k(E_r)]$
whose image is contained in the smooth locus
$\Fe(r,\mc{X})_{\text{sm}}$ of $\pi$, i.e., the image of $g$ is
disjoint from the singular locus $B$ of $\pi$.  The fiber of $\rho$
is a projective space, and every nonempty Cartier divisor in
projective space has nonempty intersection with every nonempty curve
in projective space.  Thus, the intersection of $B$ with the fiber of
$\rho$ has codimension $\geq 2$ in that fiber.
As in the proof of Proposition
\ref{prop-dim}, since both $\rho$ and $\rho|_B:B\to \Gg{1}(r,\PP_k(E))$
are flat, it follows that $B$ has codimension $\geq 2$ everywhere in
$\F{1}(r,\mc{X})$.  
Thus,
$n_1'(d,r)$ is no greater than $1+n_0(d,r)$.

\mni
\textbf{Proof of (ii).}  This is essentially the same argument as in
the proof of Corollary \ref{cor-dim}.  The morphism $\Lambda:\Fe(r,\mc{X})
\to \PP H^0(\PP_k(E),\OO_E(1))$ is a Zariski locally trivial fiber
bundle whose fibers are schemes $\Gge(r,\PP^{n-1}_k)$.  These are
nonempty and geometrically connected precisely when $n-1\geq \nee(r)$,
i.e., $n\geq 1 + \nee(r)$.  

\mni
\textbf{Proof of (iii).}  By (i), $n'_1(2,r)\leq 1+n_1(2,r)$.  Thus,
it suffices to prove that $\F{1}(r,X)$ is disconnected for
$n=n_1(2,r)=2r+1$.   Denote by
$(t_0,\dots,t_r,t_{r+1},\dots, t_{2r+1})$ an ordered basis for
$S_1(E)$.  
For $k$ algebraically
closed, all smooth quadric hypersurfaces in $\PP_k(E)$ are
projectively equivalent to the zero scheme of the quadratic
polynomial,
$$
G(t_0,t_1,\dots,t_r,t_{r+1},t_{r+2},\dots,t_{2r+1}) = t_0t_{r+1} +
t_1t_{r+2} + \dots + t_rt_{2r+1}.
$$
Consider the $r$-planes
$\Pi=\text{Zero}(t_{r+1},\dots,t_{2r+1})$,
$\Lambda=\text{Zero}(t_0,\dots,t_r)$, and $\Gamma =
\text{Zero}(t_0,t_{r+2},t_{r+3},\dots,t_{2r+1})$.  These are all
contained in $X=\text{Zero}(G)$.  By \cite[Lemma 0.3, Appendix]{BrH-B},
$(\Pi.\Lambda)_X = 0$.  If $m$ is even, then
$(\Pi.\Pi)_X = 1$.  If $m$ is odd, then $(\Pi.\Gamma)_X=1$.  Since
algebraically equivalent $m$-cycles are numerically equivalent, it
follows that $F_m(X/k)$ has more than one connected component (in fact
it has precisely two connected components).

\mni
\textbf{Proof of (iv).}  This is a computation in the Chow group of
the Grassmannian $\Gg{1}(r,\PP_k(E))$.  As an Abelian group under
addition, this is a finite free Abelian group.  Moreover, for the
projection of the flag variety to the Grassmannian,
$$
\text{Flag}(0,1,\dots,r,\PP_k(E)) \to \Gg{1}(r,\PP_k(E)),
$$
the induced pullback map on Chow rings is an injective homomorphism
that identifies the Chow ring of $\Gg{1}(r,\PP_k(E))$ with a saturated
Abelian subgroup of the Chow ring of the flag variety, i.e., the
quotient Abelian group is a finite free Abelian group.  Thus,
divisibility of cycles in the Chow ring of $\Gg{1}(r,\PP_k(E))$ can be
checked after pullback to the Chow ring of the flag variety.

\mni
There are many methods for
performing computations in the Chow ring of the flag variety.  
The method used here is via ``Chern roots'' of the total Chern class
of the tautological bundle.
Begin with the $(r+1)$-fold fiber product,
$$
\PP_k(E)^{r+1} = \PP_k(E) \times_{SP(k)} \dots \times_{\SP(k)} \PP_k(E).
$$
Inside of this scheme, denote by $D_{\leq r}$ the degeneracy  closed
subscheme (in the sense of Porteous's formula)
where
the $r+1$ points are linearly degenerate, i.e., the closed subscheme
defined by the vanishing of all $(r+1)\times(r+1)$-minors 
of the following homomorphism of
locally free sheaves,
$$
\phi_{r+1}:E^\vee\otimes_k \OO_{\PP(E)^{r+1}} \to \bigoplus_{i=0}^r
\text{pr}_i^* \OO_{\PP(E)}(1).
$$
Denote by $U_{r+1}\subset \PP_k(E)^{r+1}$ the open complement of
$D_{\leq r}$.  On $U_{r+1}$, the morphism $\phi_{r+1}$ is a locally
free quotient of rank $r+1$.  Thus, for every integer $0\leq s\leq r$, the
associated map
$$
\phi_{r+1,s+1}:E^\vee\otimes_k \OO_{U_{r+1}} \to \bigoplus_{i=0}^s
\text{pr}_i^*\OO_{\PP(E)}(1) 
$$
is a locally free quotient of rank $s+1$.  Altogether, these morphisms
define a morphism to the flag variety,
$$
\beta:U_{r+1}\to \text{Flag}(0,1,\dots,r,\PP_k(E)).
$$
Working inductively on $r$, $\beta$ is an iterated fiber bundle, each
factor of which is an affine space bundle that is trivialized for a Zariski
open covering of the target.  Thus, by the homotopy axiom for Chow
groups, \cite[Proposition 1.9]{F}, the pullback map
$$
\text{CH}^*(\text{Flag}(0,1,\dots,r,\PP_k(E))) \to \text{CH}^*(U_{r+1})
$$
is a ring isomorphism.  

\mni
On the other hand, since $U_{r+1}$ is an open subset of
$\PP_k(E)^{r+1}$, there is a presentation for the Chow group,
$$
\text{CH}^*(\PP_k(E)^{r+1})/I \cong \text{CH}^*(U_{r+1}),
$$
where $I$ is a $\mathfrak{S}_{r+1}$-invariant ideal.  Since
$\text{CH}^*(\PP_k(E)) = \ZZ[u]/\langle u^{n+1} \rangle$ for the first
Chern class $u$ of $\OO_E(1)$, this presentation is the same as
as
$$
\ZZ[u_0,\dots,u_r]/J
$$
where $J$ is an ideal containing $\langle
u_0^{n+1},\dots,u_r^{n+1}\rangle$ such that $J/\langle
u_0^{n+1},\dots,u_r^{n+1} \rangle$ equals $I$, and each $u_i$ is the
first Chern class of $\text{pr}_i^*\OO_{\PP(E)}(1)$.  Moreover, for
every integer $s=0,\dots,r$, for the tautological locally free
quotient bundle $E^\vee\otimes_k \OO_{\text{Flag}}\to Q_{s+1}$ of rank
$s+1$, 
the total
Chern class of $Q_{s+1}$ equals the image of
$(1+u_0)(1+u_1)\cdots(1+u_s)$.
Thus, the elements $(u_0,\dots,u_r)$ are the ``Chern roots'' of the
tautological flag of locally free sheaves on the flag variety (up to
signs, depending on the sign convention; these signs have no effect on
divisibility).  

\mni
In particular, the top Chern class of $\text{Sym}^d(Q_{r+1})$ equals the
image of the $\mathfrak{S}_{r+1}$-invariant polynomial,
$$
p_{r+1,d}(u_0,\dots,u_r) = 
\prod_{\underline{d},d_0+\dots+d_r=d} (d_0u_0 + \dots + d_ru_r),
$$
where the product is over all elements $\underline{d}=(d_0,\dots,d_r)$
in $(\ZZ_{\geq 0})^{r+1}$ with $d_0+\dots+d_r = d$.  
In particular, separating out those factors $\underline{d} =
d\mathbf{e}_i$ where only $d_i=d$ and all other $d_j$ are zero, $p_{r+1,d}$
factors as
$$
p_{r+1,d}(u_0,\dots,u_r) = d^{r+1} (u_0u_1\cdots u_r)
q_{r+1,d}(u_0,\dots,u_r),
$$
$$
q_{r+1,d}(u_0,\dots,u_r) = 
\prod_{\underline{d}\neq
  d\mathbf{e}_i, d_0+\dots+d_r=d} (d_0u_0+\dots + d_ru_r).
$$
Thus, the top Chern class of $\text{Sym}^d(S_{r+1}^\vee)$ equals
$d^{r+1}$ times another class, in fact $d^{r+1}c_{r+1}(S_{r+1}^\vee)
\gamma$ where $\gamma$ is the class obtained as the 
image of $q_{r+1,d}(u_0,\dots,u_r)$.  

\mni
A
priori, this top Chern class might be zero as a cycle class.  
However, by Theorem
\ref{thm-Waldron}, when $f_1(n,r,d)\geq 0$,
this class is nonzero: it is Poincar\'{e} dual to
the transversal cycle $\F{1}(r,X)$ for sufficiently general $X$.
Moreover, since $\F{1}(r,X)$ is generically smooth for general $X$, in
the special case that $f_1(n,r,d)$ equals $0$, $\F{1}(r,X)$ is a
zero-dimensional, smooth $k$-scheme whose length equals the degree of
this top Chern class.
By the computation above, the
degree of the top Chern class in $\text{CH}^{(r+1)(n-r)}(\Gg{1}(r,\PP_k(E)))$
equals $d^{r+1}$ times the degree of another cycle.  Thus the length of the
zero-dimensional, smooth $k$-scheme $\F{1}(r,X)$ is divisible by
$d^{r+1}$.  In particular, for $d\geq 2$, $\F{1}(r,X)$ is not
geometrically connected as a $k$-scheme.

\bibliography{my}

\begin{thebibliography}{BHB06}

\bibitem[BHB06]{BrH-B}
T.~D. Browning and D.~R. Heath-Brown.
\newblock The density of rational points on non-singular hypersurfaces. {II}.
\newblock {\em Proc. London Math. Soc. (3)}, 93(2):273--303, 2006.
\newblock With an appendix by J. M. Starr.
\newblock URL: \url{http://dx.doi.org/10.1112/S0024611506015784}, \href
  {http://dx.doi.org/10.1112/S0024611506015784}
  {\path{doi:10.1112/S0024611506015784}}.

\bibitem[dJS06]{dJS10}
A.~J. de~Jong and J.~Starr.
\newblock Low degree complete intersections are rationally simply connected.
\newblock 2006.
\newblock URL: \url{http://www.math.stonybrook.edu/~jstarr/papers/}.

\bibitem[Ful84]{F}
William Fulton.
\newblock {\em Intersection theory}, volume~2 of {\em Ergebnisse der Mathematik
  und ihrer Grenzgebiete (3) [Results in Mathematics and Related Areas (3)]}.
\newblock Springer-Verlag, Berlin, 1984.

\bibitem[Gro63]{EGA3}
A.~Grothendieck.
\newblock \'{E}l\'ements de g\'eom\'etrie alg\'ebrique. {III}. \'{E}tude locale
  des sch\'emas et des morphismes de sch\'emas.
\newblock {\em Inst. Hautes \'Etudes Sci. Publ. Math. 11 (1961), 349-511;
  ibid.}, (17):137--223, 1963.
\newblock \verb+http://www.numdam.org/item?id=PMIHES_1961__11__5_0+.

\bibitem[Gro67]{EGA4}
A.~Grothendieck.
\newblock \'{E}l\'ements de g\'eom\'etrie alg\'ebrique. {IV}. \'{E}tude locale
  des sch\'emas et des morphismes de sch\'emas.
\newblock {\em Inst. Hautes \'Etudes Sci. Publ. Math. 20 (1964), 101-355; ibid.
  24 (1965), 5-231; ibid. 28 (1966), 5-255; ibid.}, (32):5--361, 1967.
\newblock \verb+http://www.numdam.org/item?id=PMIHES_1965__24__5_0+.

\bibitem[Gut13]{JanGutt}
Jan~Aleksander Gutt.
\newblock {\em Hwang-{M}ok rigidity of cominuscule homogeneous varieties in
  positive characteristic}.
\newblock ProQuest LLC, Ann Arbor, MI, 2013.
\newblock Thesis (Ph.D.)--State University of New York at Stony Brook.
\newblock URL: \url{https://arxiv.org/pdf/1305.5296.pdf}.

\bibitem[HL87]{HochsterLaksov}
Melvin Hochster and Dan Laksov.
\newblock The linear syzygies of generic forms.
\newblock {\em Comm. Algebra}, 15(1-2):227--239, 1987.
\newblock URL: \url{http://dx.doi.org/10.1080/00927872.1987.10487449}, \href
  {http://dx.doi.org/10.1080/00927872.1987.10487449}
  {\path{doi:10.1080/00927872.1987.10487449}}.

\bibitem[HS05]{HS2}
Joe Harris and Jason Starr.
\newblock Rational curves on hypersurfaces of low degree. {II}.
\newblock {\em Compos. Math.}, 141(1):35--92, 2005.
\newblock \href {http://arxiv.org/abs/arXiv:math.AG/0207257}
  {\path{arXiv:arXiv:math.AG/0207257}}.

\bibitem[Jou83]{Jou}
Jean-Pierre Jouanolou.
\newblock {\em Th\'eor\`emes de {B}ertini et applications}, volume~42 of {\em
  Progress in Mathematics}.
\newblock Birkh\"auser Boston Inc., Boston, MA, 1983.

\bibitem[Min16]{Minoccheri}
Cristian Minoccheri.
\newblock On the arithmetic of weighted complete intersections of low degree.
\newblock 2016.
\newblock URL: \url{https://arxiv.org/pdf/1608.01703.pdf}.

\bibitem[Nen16]{Nenashev}
Gleb Nenashev.
\newblock A note on fr\"{o}berg's conjecture for forms of equal degrees.
\newblock to appear in Comptes rendus Mathematique, 2016.
\newblock URL: \url{https://arxiv.org/pdf/1512.04324.pdf}.

\bibitem[Sta06]{Slinear}
Jason Starr.
\newblock Fano varieties and linear sections of hypersurfaces.
\newblock preprint, 2006.
\newblock URL: \url{http://www-math.mit.edu/~jstarr/papers/}.

\bibitem[Wal08]{Waldron}
Alex Waldron.
\newblock Fano varieties of low-degree smooth hypersurfaces and unirationality.
\newblock Bachelor thesis, Harvard University, Cambridge, Massachusetts, 2008.
\newblock URL:
  \url{http://www.math.harvard.edu/theses/senior/waldron/alex_waldron.pdf}.

\end{thebibliography}
\bibliographystyle{alphaurl}

\end{document}